
\documentclass[12pt]{amsart}
\usepackage[a4paper]{geometry}
\geometry{top=1.0in, bottom=1.0in, left=1.0in, right=1.0in}


\vfuzz2pt 

 \newtheorem{thm}{Theorem}[section]
 
 \newtheorem{lem}[thm]{Lemma}
 
 \theoremstyle{definition}
 
 \theoremstyle{remark}
 
 \numberwithin{equation}{section}
\newcommand{\grad}{\mathop{\mathrm{grad}}}
\newcommand{\trace}{\mathop{\mathrm{trace}}}
\usepackage{xcolor}

\begin{document}
\title {\textbf{Hypersurfaces satisfying $\triangle \vec {H}=\lambda \vec {H}$ in $\mathbb{E}_{\lowercase{s}}^{5}$}}
\author{Ram Shankar Gupta and Andreas Arvanitoyeorgos$^\ast$}
 \maketitle
\begin{abstract}
In this paper, we study hypersurfaces $M_{r}^{4}$ $(r=0, 1, 2, 3,
4)$ satisfying $\triangle \vec{H}=\lambda \vec{H}$ ($\lambda$ a
constant) in the pseudo-Euclidean space $\mathbb{E}_{s}^{5}$ $(s=0, 1, 2, 3,
4, 5)$. We obtain that every such hypersurface in $\mathbb{E}_{s}^{5}$ with
diagonal shape operator has constant mean curvature, constant norm
of second fundamental form and constant scalar curvature.
 Also, we prove that every biharmonic hypersurface in $\mathbb{E}_{s}^{5}$ with diagonal shape operator must be minimal.\\
\\
\textbf{AMS 2020 Mathematics Subject Classification:} 53D12, 53C40, 53C42\\
\textbf{Key Words:} Pseudo-Euclidean space, Mean curvature vector,
Biharmonic hypersurface, Chen's conjecture.\\
\textbf{Data Availability Statement:} Data sharing not applicable to this article as no datasets were generated or analysed during the current study.
\end{abstract}

\maketitle
\section{\textbf{Introduction}}
Let $M^{n}_{r}$ be an $n$-dimensional, connected submanifold of the
pseudo-Euclidean space $\mathbb{E}^{m}_{s}$. Denote by $\vec {x}$, $\vec
{H}$, and $\triangle $ respectively the position vector field, mean
curvature vector field of $M^{n}_{r}$, and the Laplace operator on
$M^{n}_{r}$, with respect to the induced metric $g$ on $M^{n}_{r}$,
from the indefinite metric on the ambient space $\mathbb{E}^{m}_{s}$. It is
well known that $$\triangle \vec {x} = -n \vec {H},$$ where
$\vec{H}$ is the mean curvature vector of $M$. An immersion is
minimal ($\vec{H} = 0$) if and only if $\triangle \vec{x} = 0$ and
is called biharmornic if $\triangle^2 \vec{x} = 0$ i.e.
\begin{equation}\label{e:a1}\triangle \vec{H} = 0.\end{equation}

Of course, for an immersion, minimality implies biharmonicity. The
harmonicity equation $\triangle \vec{H} = 0$ for the mean curvature
vector field $\vec{H}$ can be viewed as a special case of the
following equation\begin{equation}\label{e:a2}\triangle\vec{H}
=\lambda\vec{H}, \quad\lambda\in\mathbb{R}.
\end{equation}

The study of submanifolds with harmonic mean curvature vector field
was initiated by Chen in 1985 and arose in the context of his theory
of submanifolds of finite type. For a survey on submanifolds of
finite type and various related topics was presented in
\cite{r9}, \cite{r10}.

In 1991, Chen conjectured
the following:\\
 \emph{\textbf{Conjecture:} The only biharmonic submanifolds of Euclidean
spaces are the minimal ones.}  

Chen's conjecture has been verified
and found true for submanifolds in various Euclidean spaces (\cite{biha2}, \cite{biha3},\cite{biha7}, \cite{biha19}, \cite{biha9}, \cite{biha10}, \cite{biha29}, \cite{biha13}, \cite{bihat1}, \cite{biha12}).  In some cases some additional hypotheses had to be made.
The conjecture is not true always for  submanifolds of the
semi-Euclidean spaces (see \cite{r11}, \cite{r12}, \cite{r13}). However, for
hypersurfaces in semi-Euclidean spaces, Chen's conjecture seems to be true (cf. \cite{r3}, \cite{r12}, \cite{r13}, \cite{biha20}, \cite{r19}).

The study of equation (\ref{e:a2}) for submanifolds in
pseudo-Euclidean spaces was originated by Ferrandez et al. in
\cite{r5,r6}. They showed that if the minimal polynomial of the
shape operator of a hypersurface $M^{n-1}_{r}$  ($r = 0, 1$) in
$\mathbb{E}^{n}_{1}$ is at most of degree two, then it has constant mean
curvature. Also, in \cite{r9} various classification theorems for
submanifolds in a Minkowski space time was obtained. 
In \cite{r2}
it was proved that every hypersurface $M^{3}_{r}$ ($r = 0, 1, 2, 3$)
in $\mathbb{E}^{4}_{s}$ satisfying (\ref{e:a2}) with diagonal shape operator,
has constant mean curvature. Recently, it was proved that every
hypersurface in $\mathbb{E}^{n}_{s}$ satisfying (\ref{e:a2}) of diagonal
shape operator with at most three distinct principal curvatures has
constant mean curvature \cite{r15}.

In view of the above developments, we study hypersurfaces $M^{4}_{r}$ in
$\mathbb{E}_{s}^{5}$ satisfying (\ref{e:a2}) where the number of distinct
principal curvatures is at least $4$. The main result is the following:

\begin{thm}
Let  $M_{r}^{4}$ $(r=0, 1,\dots, 4)$ be a hypersurface satisfying
$\triangle \vec {H}= \lambda \vec {H}$ in the semi-Euclidean space
$\mathbb{E}_{s}^{5}$ $(s=0, 1,\dots, 5)$ with diagonal shape operator and with all
distinct principal curvatures. Then  $M_{r}^{4}$  has constant mean curvature,
constant norm of second fundamental form and constant scalar
curvature.
 \end{thm}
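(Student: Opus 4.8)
The plan is to set up a local orthonormal frame $\{e_1,e_2,e_3,e_4\}$ of principal directions diagonalizing the shape operator $A$, with $Ae_i=\kappa_i e_i$ and signs $\varepsilon_i=g(e_i,e_i)=\pm1$, and to write the Codazzi and Gauss equations in this frame. Since all four principal curvatures are distinct, the connection coefficients $\omega_{ij}(e_k)$ are, by the Codazzi equations, completely determined by the $\kappa_i$ and their derivatives; in particular $e_i(\kappa_j)$ for $i\neq j$ is controlled, and the ``off-diagonal'' structure is rigid. Then I would compute $\triangle\vec H$ explicitly. Writing $H=\frac14\sum_i\varepsilon_i\kappa_i$ for the (scalar) mean curvature and $\vec H=H\,\xi$ with $\xi$ the unit normal, the condition $\triangle\vec H=\lambda\vec H$ splits, via Beltrami-type formulas, into a tangential part and a normal part:
\begin{equation}\label{e:plan1}
2A(\grad H)+\tfrac{n}{2}\grad(H^2)=0,\qquad \triangle H+H\,\|A\|^2=\lambda H,
\end{equation}
(with $n=4$), exactly as in the Euclidean hypersurface case but now with signs $\varepsilon_i$ inserted into the trace expressions $\|A\|^2=\sum_i\varepsilon_i\kappa_i^2$ and the divergence formulas. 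The strategy is then the familiar one (à la the $\mathbb{E}^4_s$ paper \cite{r2} and \cite{r15}): assume $H$ is non-constant on some open set and derive a contradiction.

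The key steps, in order, would be: (i) from the tangential equation \eqref{e:plan1}, if $\grad H\neq0$ then $\grad H$ is a principal direction — say $\grad H=e_1(H)\,e_1$ after relabeling — with $\kappa_1=-\tfrac{n}{2}H$, and $e_2(H)=e_3(H)=e_4(H)=0$; (ii) feed this into the Codazzi equations to express all $\omega_{ij}(e_k)$ in terms of $\kappa_1,\dots,\kappa_4$ and $e_1(\kappa_j)$, and extract the structure equations (the analogue of Cartan's identity) that the $\kappa_j$ must satisfy; (iii) use $\kappa_1=-2H$ together with the commutation relations $[e_i,e_j]H=0$ for $i,j\neq1$ and the Gauss equation to pin down relations among the $\kappa_j$ — typically one shows the remaining three principal curvatures satisfy a polynomial system forcing either a repeated root (contradicting distinctness) or $H$ constant; (iv) differentiate the normal equation in \eqref{e:plan1} along $e_1$ and combine with (iii) to close the argument. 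Once $H$ is constant, the normal equation gives $\|A\|^2$ constant, and then the scalar curvature, which by Gauss is $\tfrac12\big((\trace A)^2-\|A\|^2\big)=\tfrac12(16H^2-\|A\|^2)$ up to the sign conventions of the indefinite metric, is automatically constant.

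The main obstacle I anticipate is step (iii): with four distinct principal curvatures and the extra sign ambiguities $\varepsilon_i$ coming from the index $s$ of the ambient space, the Codazzi–Gauss system is a large and delicate algebraic/differential system, and the bookkeeping of which $\varepsilon_i$-patterns can occur (five values of $s$, and within each the possible causal characters of the principal directions) is where the real work lies; one must argue uniformly, or case-by-case, that no configuration allows a non-constant solution. A secondary subtlety is that $\grad H$ being null is a priori possible in the indefinite setting — however, since $\grad H$ is constrained to be an eigenvector $e_1$ of the diagonalizable operator $A$ and the frame is orthonormal (so each $e_i$ is non-null), this degenerate case does not actually arise, which is precisely why the hypothesis of a \emph{diagonal} shape operator is essential and must be invoked carefully at the outset.
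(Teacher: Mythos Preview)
Your framework through step (ii) matches the paper's setup exactly (Lemma~3.1 and equations (2.5)--(2.6)), and your final paragraph correctly identifies step (iii) as the crux. But your proposed mechanism there --- a polynomial system forcing a \emph{repeated root} and hence contradicting distinctness --- is not what closes the argument, and I do not see how to make it work: the Codazzi--Gauss system does not naturally produce a discriminant condition, and the paper never argues this way.

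What the paper does instead is a long elimination-theoretic argument whose endpoint is always a nontrivial polynomial in a \emph{single} principal curvature with \emph{constant real coefficients}, forcing that curvature (and ultimately $\lambda_1=-2\varepsilon H$) to be constant --- contradicting $e_1(H)\neq0$. Concretely: after your step (ii), one first proves (Lemma~3.3) that $\omega_{ii}^{j}=0$ for distinct $i,j\in\{2,3,4\}$, by repeatedly differentiating the normal equation and the trace identity along $e_1$, building polynomials $F_1,F_2,F_3$ in $\lambda_3,\lambda_4$ of degrees up to $24$, and eliminating via resultants. Then (Lemma~3.4 and Section~4) the argument bifurcates on whether $b_1:=(\lambda_3-\lambda_4)b_2+(\lambda_4-\lambda_2)b_3+(\lambda_2-\lambda_3)b_4$ vanishes, where $b_i=\epsilon_i\omega_{ii}^1$. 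If $b_1\neq0$, the remaining mixed $\omega_{ij}^k$ vanish, the Gauss equations collapse to $\epsilon_1 b_ib_j=-\lambda_i\lambda_j$, and further differentiation/elimination yields a constant-coefficient polynomial in $\lambda_4$. If $b_1=0$, one introduces auxiliary functions $\alpha,\phi$ with $b_i=\alpha\lambda_i+\phi$, derives coupled ODEs for $\alpha,\phi$ along $e_1$, and eliminates down to a constant-coefficient polynomial in $\alpha$ of enormous degree. None of this is visible from your sketch, and the case split on $b_1$ in particular is a genuine structural ingredient you are missing. On the other hand, the sign bookkeeping you worry about is a non-issue: the paper carries $\varepsilon,\epsilon_i$ as formal symbols throughout and never splits on causal character.
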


 The case of at most three distinct principal curvatures for
hypersurfaces satisfying $\triangle \vec {H}= \lambda \vec {H}$ with
diagonal shape operator $\mathcal{A}$ in semi-Euclidean space of arbitrary
dimension, has already been treated in \cite{r15}, also with the
conclusion that $H$ must be constant.  Then using equation  (\ref{e:a10}) below,
we obtain that $\trace(\mathcal{A}^2)$ is constant, and further more using Gauss
equation it follows that the scalar curvature is also constant. Therefore, the
following theorem is a consequence of Theorem 1.1 and \cite{r15}:

\begin{thm}
Let  $M_{r}^{4}$ $(r=0, 1,\dots, 4)$ be a hypersurface satisfying
$\triangle \vec {H}= \lambda \vec {H}$ in the semi-Euclidean space
$\mathbb{E}_{s}^{5}$ $(s=0, 1,\dots, 5)$ of diagonal shape operator. Then $M_{r}^{4}$
has constant mean curvature, constant norm of second fundamental
form and constant scalar curvature.
 \end{thm}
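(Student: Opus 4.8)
The plan is to obtain Theorem~1.2 by combining three ingredients already available: Theorem~1.1 (the case of four distinct principal curvatures), the at-most-three-distinct-principal-curvatures case established in \cite{r15}, and the pointwise expression for $\triangle\vec H$ in terms of $H$ and the shape operator recorded in \eqref{e:a10}. Since a hypersurface of diagonal shape operator need not have a fixed number of distinct principal curvatures, the first task is a stratification that reduces matters to those two cases.

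As $\mathcal A$ is diagonalizable, at each $p\in M_r^4$ the number $k(p)$ of distinct principal curvatures belongs to $\{1,2,3,4\}$, and $k$ is lower semicontinuous; hence the set $\mathcal U\subset M_r^4$ of points near which $k$ is locally constant is open and dense. On each connected component of $\mathcal U$ on which $k\le 3$, the result of \cite{r15} applies (to that component, viewed as a hypersurface satisfying $\triangle\vec H=\lambda\vec H$) and gives that $H$ is constant there; on each connected component on which $k=4$, all four principal curvatures are distinct, so Theorem~1.1 applies and gives that $H$, $\trace(\mathcal A^2)$ and the scalar curvature are all constant there. Using that $M_r^4$ is connected, that $\mathcal U$ is dense, and that $H$, $\trace(\mathcal A^2)$ and the scalar curvature are continuous, one then checks that the values obtained on the various components of $\mathcal U$ agree, so that these three functions are constant on all of $M_r^4$. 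I expect this gluing step --- in particular the verification that $M_r^4\setminus\mathcal U$ does not obstruct the passage from local constancy on the dense open set $\mathcal U$ to global constancy on $M_r^4$ --- to be the only genuinely delicate point of the argument; all the substantive work is carried out in Theorem~1.1 and in \cite{r15}.

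Alternatively, and this is the route actually indicated in the introduction, once constancy of $H$ has been established one can bypass the stratawise information about $\trace(\mathcal A^2)$ and the scalar curvature and derive them directly. Feeding $\grad H=0$ and $\triangle H=0$ into the decomposition of $\triangle\vec H$ given by \eqref{e:a10} and comparing normal components in $\triangle\vec H=\lambda\vec H$, one is left with an algebraic identity of the form $H\,\trace(\mathcal A^2)=\varepsilon\,\lambda\,H$, where $\varepsilon=\pm1$ is determined by the causal character of the unit normal; hence $\trace(\mathcal A^2)=\varepsilon\lambda$ is constant on the open set where $H\ne 0$, while on the complementary minimal locus --- either empty or already covered by the conclusions on the strata --- one again gets $\trace(\mathcal A^2)$ constant. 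Thus $\trace(\mathcal A^2)$, equivalently the squared norm of the second fundamental form, is constant on $M_r^4$. Finally the Gauss equation of a hypersurface of $\mathbb E_s^5$ writes the scalar curvature as $\varepsilon\big(16H^2-\trace(\mathcal A^2)\big)$ (with the usual normalization $\trace\mathcal A=4H$), so constancy of $H$ and of $\trace(\mathcal A^2)$ forces the scalar curvature to be constant, completing the proof.
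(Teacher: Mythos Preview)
Your proposal is correct and follows exactly the route the paper takes: Theorem~1.2 is stated as a direct consequence of Theorem~1.1 together with \cite{r15}, after which constancy of $\trace(\mathcal A^2)$ is read off from \eqref{e:a10} and constancy of the scalar curvature from the Gauss equation. You are actually more careful than the paper about the stratification by number of distinct principal curvatures and the gluing across strata; the paper does not address this point explicitly.
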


Also, we study biharmonic hypersurfaces in $\mathbb{E}^5_s$ and we prove
the following:
\begin{thm}
Every biharmonic hypersurface $M_{r}^{4}$ $(r=0, 1,\dots, 4)$ in the
semi-Euclidean space $\mathbb{E}_{s}^{5}$ $(s=0, 1,\dots, 5)$ with diagonal
shape operator must be minimal.
 \end{thm}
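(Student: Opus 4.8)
The plan is to combine Theorem~1.2 with the standard splitting of the biharmonic condition into its tangential and normal parts. First I would observe that biharmonicity, $\triangle\vec H = 0$ (equation~(\ref{e:a1})), is precisely the case $\lambda = 0$ of equation~(\ref{e:a2}); hence Theorem~1.2 applies directly to $M_r^4$ and yields that it has \emph{constant} mean curvature. Writing $\vec H = H\xi$, where $\xi$ is a local unit normal and $\varepsilon := \langle\xi,\xi\rangle\in\{1,-1\}$, this says the function $H$ is constant; in particular $\grad H = 0$ and $\triangle H = 0$.

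Next I would record the decomposition of $\triangle\vec H$ relative to $T\mathbb{E}_{s}^{5}|_{M} = TM \oplus \mathbb{R}\xi$. A direct computation with the Gauss and Weingarten formulas together with the Codazzi equation in the flat space $\mathbb{E}_{s}^{5}$ (this is the computation underlying equation~(\ref{e:a10})) gives
\begin{equation*}
\triangle \vec H \;=\; \bigl(2\,\mathcal{A}(\grad H) + 4\,\varepsilon\,H\,\grad H\bigr)\;+\;\bigl(\triangle H + \varepsilon\,H\,\trace(\mathcal{A}^{2})\bigr)\xi .
\end{equation*}
Since $\vec H$ is normal, $\triangle\vec H = 0$ is equivalent to the vanishing of both components. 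The tangential one, $2\mathcal{A}(\grad H) + 4\varepsilon H\grad H = 0$, is automatically satisfied because $\grad H = 0$; the normal one, using $\triangle H = 0$, reduces to $\varepsilon\,H\,\trace(\mathcal{A}^{2}) = 0$, that is, $H\,\trace(\mathcal{A}^{2}) = 0$.

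To finish, I would work on the open set $U = \{\,p\in M_r^4 : H(p)\neq 0\,\}$, on which $\trace(\mathcal{A}^{2})\equiv 0$. Since the shape operator is assumed diagonal, it has four \emph{real} principal curvatures $k_{1},\dots,k_{4}$ and $\trace(\mathcal{A}^{2}) = k_{1}^{2}+k_{2}^{2}+k_{3}^{2}+k_{4}^{2}$; hence $\trace(\mathcal{A}^{2}) = 0$ forces $k_{1}=\dots=k_{4}=0$, so $\mathcal{A}\equiv 0$ on $U$ and therefore $\vec H\equiv 0$ on $U$, which contradicts the definition of $U$ unless $U = \emptyset$. As $H$ is constant, this gives $H\equiv 0$, i.e. $M_r^4$ is minimal.

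The genuinely hard part — proving that $H$ is constant — is not carried out here: it is supplied by Theorem~1.1 together with \cite{r15} (through Theorem~1.2), and that is where all the serious analysis of the structure equations lives. What remains for Theorem~1.3 is essentially routine: setting up the pseudo-Riemannian tangential/normal splitting of $\triangle\vec H$ with the correct signs and factors of $\varepsilon$, and then exploiting the hypothesis of a \emph{diagonal} shape operator, which is exactly what makes the principal curvatures real so that $\trace(\mathcal{A}^{2}) = 0$ implies $\mathcal{A} = 0$. For a shape operator with a complex conjugate pair of eigenvalues or a nontrivial Jordan block this last implication fails, so the diagonalizability assumption genuinely cannot be dropped in this argument.
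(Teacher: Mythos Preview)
Your argument is correct, and it is actually cleaner than the paper's own proof. The paper proceeds by a case split on the number of distinct principal curvatures: for four distinct curvatures it reruns the machinery of Sections~3--4 with $\lambda=0$ to get $H$ constant and then invokes (\ref{e:d1}); for three it cites \cite{r19}; and for two it performs a direct ad~hoc computation (deriving two incompatible expressions for $e_1e_1(H)$) that yields $H=0$ without first establishing constancy. You instead invoke Theorem~1.2 once, which already absorbs all of these cases, and then close with the single observation that for a \emph{diagonal} shape operator $\trace(\mathcal{A}^2)=\sum_i k_i^2$ is a genuine sum of squares of real numbers, so $H\,\trace(\mathcal{A}^2)=0$ forces $H=0$. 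This is exactly the same ``normal part'' reduction the paper uses in case~(a), but you apply it uniformly; the paper's separate treatment of the two-eigenvalue case is therefore redundant once Theorem~1.2 is available. Your closing remark that diagonalizability is essential here (since otherwise $\trace(\mathcal{A}^2)$ need not be a sum of squares) is a worthwhile point that the paper does not make explicit.
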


  We note that the results obtained in
\cite{biha20}, \cite{r19} can be recovered from Theorem 1.3.
Also, the result in the recent preprint \cite{biha30} is a special case of Theorem 1.3, if we take indices $r=s=0$ and $\lambda =0$.

Also, in the present work we have assumed that the shape operator of $M_r^4$ is diagonal.  This is always the case for hypersurfaces in Euclidean spaces, but not in semi-Euclidean spaces. It was conjectured in \cite{rs3} that: Any hypersurface satisfying (\ref{e:a2}) in $\mathbb{E}^{n+1}_s$ has $H = \ \mbox{{\it constant}}$. It is an interesting project to study the same problem for all possible forms of shape operator (see eg. \cite{biha30}, \cite{biha31}).

Concerning the idea of the proof of  main Theorem 1.1, its seed lies in the work \cite{r3}, where one tries to find some relations among  the structural constants $\omega _{ij}^k$ of the connection of $M_r^4$, in particular some of these to be zero.  This requires a process of laborious successive differentiations  and algebraic techniques.
The first of the four eigenvalues of the shape operator $\mathcal{A}$ of $M_r^4$ is a multiple of the mean curvature $H$ and it is an ultimate goal  to prove that this eigenvalue is constant.  
This approach has been successfully used by several authors (most of them  cited in the References).  However,  we still  lack a unified approach towards a proof of Chen's conjecture in its generality, or its variant considered in the present work.

\section{\textbf{Preliminaries}}

   Let ($M_{r}^{4}, g$), $r=0, 1, \dots, 4,$ be a $4$-dimensional hypersurface isometrically immersed in a $5$-dimensional semi-Euclidean space
($\mathbb{E}^{5}_{s}, \overline g$), $s=0, 1, \dots, 4, 5$ and $g = \overline
g_{|M_{r}^{4}}$. We denote by $\xi$ the unit normal vector to
$M_{r}^{4}$ with $\overline g(\xi, \xi)= \varepsilon$, where
$\varepsilon = \pm 1$, according as $M_{r}^{4}$ is pseudo-Riemannian
or Riemannian hypersurface.

 The Gauss and Codazzi equations are given by
\begin{equation}\label{e:a7}
R(X, Y)Z = g(\mathcal{A}Y, Z) \mathcal{A}X - g(\mathcal{A}X, Z)
\mathcal{A}Y,
\end{equation}
\begin{equation}\label{e:a8}
(\nabla_{X}\mathcal{A})Y = (\nabla_{Y}\mathcal{A})X,
\end{equation}
respectively, where $R$ is the curvature tensor, $\mathcal{A}$ is
the shape operator, $\nabla$ denotes the linear connection on $M_r^4$ in
$\mathbb{E}_s^5$ and
\begin{equation}\label{e:a9}
(\nabla_{X}\mathcal{A})Y = \nabla_{X}\mathcal{A}Y-
\mathcal{A}(\nabla_{X}Y),
\end{equation}
for all $ X, Y, Z \in \Gamma(TM_{r}^{4})$.

The mean curvature is given by
\begin{equation}\label{e:a6}
\varepsilon H = \frac{1}{4} \trace \mathcal{A}.
\end{equation}

  The necessary and sufficient
 conditions for $M_{r}^{4}$ in $E_{s}^{5}$ satisfying (\ref{e:a2}) are (cf. for example \cite{r2}, \cite{r12})
\begin{equation}\label{e:a10}
\triangle H + \varepsilon H \trace (\mathcal{A}^{2}) = \lambda H,
\end{equation}
\begin{equation}\label{e:a11}
A (\grad H)+ 2\varepsilon H \grad H = 0.
\end{equation}

Also, the Laplace operator $\triangle$ of a scalar valued function
$f$ is given by
\begin{equation}\label{e:a12}
\triangle f = -\sum_{i=1}^{4}\epsilon_{i}(e_{i}e_{i}f -
\nabla_{e_{i}}e_{i}f),
\end{equation}
where $\{e_{1}, e_{2}, e_{3}, e_{4}\}$ is an orthonormal local
tangent frame on $M^{4}_{r}$ and $g(e_{i}, e_{i})=\epsilon_{i}$.\\

\section{\textbf{Hypersurfaces in $E^5_s$ satisfying $\triangle \vec {H}=  \lambda \vec {H}$ with all distinct principal curvatures}}

 In this section, we study hypersurfaces $M_{r}^{4}$ with
$\triangle \vec {H}=  \lambda \vec {H}$ and diagonal shape operator
with all distinct principal curvatures. We also assume that the mean
curvature is not constant and $\grad H\neq 0$. This implies the existence of an open connected subset
$U$ of $M$ with $\grad_p H\neq 0$ for all $p\in U$.  From
(\ref{e:a11}), it is easy to see that $\grad H$ is an eigenvector of
the shape operator $\mathcal{A}$ with the corresponding principal
curvature $-2\varepsilon H$.  Without losing generality, we choose
$e_{1}$ in the direction of $\grad H$ and therefore the shape operator
$\mathcal{A}$ of $M_{r}^{4}$ will take the following form with
respect to a suitable frame  $\{e_{1}, e_{2}, e_{3}, e_{4}\}$:

\begin{equation}\label{e:b1}\mathcal{A}= \left(
                            \begin{array}{cccccc}
                              -2\varepsilon H & & &\\
                              &   \lambda_{2} & &\\
                              & &     \lambda_{3} &\\
                              & & & \lambda_{4} \\
                            \end{array}
                          \right).
\end{equation}
Due to (\ref{e:a6}) the first principal curvature is $\lambda_1=-2\varepsilon H$.  Also, using (\ref{e:a6}) and (\ref{e:b1}), we obtain
 \begin{equation}\label{e:b11}
\sum_{j=2}^4\lambda_j=6\varepsilon H=-3\lambda_1.
\end{equation}

The vector field $\grad H$ can be expressed as
\begin{equation}\label{e:b2}
\grad H =\sum_{i=1}^{4} e_{i}(H)e_{i}.
\end{equation}

 As we have taken $e_{1}$ parallel to $\grad H$, it follows that
\begin{equation}\label{e:b3}
e_{1}(H)\neq 0, \quad e_{2}(H)= 0, \quad e_{3}(H) = 0, \quad
e_{4}(H)= 0.
\end{equation}

We express
\begin{equation}\label{e:b4}
\nabla_{e_{i}}e_{j}=\sum_{k=1}^{4}\epsilon_{k}\omega_{ij}^{k}e_{k},
\quad i, j = 1,\dots, 4.
\end{equation}

The following lemma gives some simplifications of the above derivatives.
\begin{lem}\label{L1}
 Let $M^{4}_{r}$ be a hypersurface satisfying $(\ref{e:a2})$ of non constant mean curvature with all distinct principal curvatures in
 $E^{5}_{s}$, having the shape operator given by $(\ref{e:b1})$ with respect to suitable orthonormal frame  $\{e_{1},
e_{2}, e_{3}, e_{4}\}$. Then,
$$
 \nabla_{e_{1}}e_{l}= 0, \hspace{.2
cm} l = 1, 2, 3, 4, \hspace{1.0 cm}\nabla_{e_{i}}e_{1}=
-\omega_{ii}^{1}\epsilon_{i}e_{i}, \hspace{.2 cm} i = 2, 3, 4,$$
$$ \hspace{.2 cm}\nabla_{e_{i}}e_{i}=\sum_{i\neq l, l=1}^{4}\omega_{ii}^{l}\epsilon_{l}e_{l}, \hspace{.2 cm} i = 2, 3, 4,
 \hspace{1.0 cm} \nabla_{e_{i}}e_{j}=\sum_{j\neq k, k=2}^{4}\omega_{ij}^{k}\epsilon_{k}e_{k}, \hspace{.2 cm} i, j = 2, 3,
4,\\
$$
where $\omega_{ij}^{i}$ satisfy $(\ref{e:b5})$ and $(\ref{e:b6})$.
\end{lem}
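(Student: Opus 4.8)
The plan is to extract the structural simplifications from the Gauss and Codazzi equations, together with the compatibility of $\nabla$ with the diagonal shape operator. First I would write the Codazzi equation $(\ref{e:a8})$ in the frame $\{e_1,e_2,e_3,e_4\}$: evaluating $(\nabla_{e_i}\mathcal{A})e_j=(\nabla_{e_j}\mathcal{A})e_i$ using $(\ref{e:b4})$ and the fact that $\mathcal{A}e_k=\lambda_k e_k$ (with $\lambda_1=-2\varepsilon H$) yields, for the off-diagonal $e_k$-component with $i,j,k$ distinct, the relation $(\lambda_i-\lambda_k)\omega_{ij}^k=(\lambda_j-\lambda_k)\omega_{ji}^k$; and for the $e_j$-component (with $i\neq j$) the relation $e_i(\lambda_j)=(\lambda_i-\lambda_j)\omega_{ij}^j$, together with $e_i(\lambda_i)$ appearing through $(\nabla_{e_i}\mathcal{A})e_j$'s $e_i$-part. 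These are presumably exactly the equations the lemma refers to as $(\ref{e:b5})$ and $(\ref{e:b6})$. The key input that distinguishes this setting is $(\ref{e:b3})$: since $e_2(H)=e_3(H)=e_4(H)=0$ and $\lambda_1=-2\varepsilon H$, we get $e_i(\lambda_1)=0$ for $i=2,3,4$, while $e_1(H)\neq0$.

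Next I would exploit metric compatibility. Differentiating $g(e_i,e_j)=\epsilon_i\delta_{ij}$ (a constant) gives the standard antisymmetry $\epsilon_j\omega_{ij}^k\epsilon_k+\epsilon_k\omega_{ik}^j\epsilon_j=0$ after lowering; more directly, $0=e_i\,g(e_j,e_k)=g(\nabla_{e_i}e_j,e_k)+g(e_j,\nabla_{e_i}e_k)$ forces $\epsilon_k\omega_{ij}^k=-\epsilon_j\omega_{ik}^j$ and in particular $\omega_{ij}^j=0$ for all $i,j$ — wait, more carefully $\omega_{ii}^i=0$ from $j=k=i$, and the skew relation ties $\omega_{ij}^k$ to $\omega_{ik}^j$. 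Combining this skew-symmetry with the Codazzi-derived relation $(\lambda_i-\lambda_k)\omega_{ij}^k=(\lambda_j-\lambda_k)\omega_{ji}^k$ and, say, $(\lambda_j-\lambda_i)\omega_{ki}^j=(\lambda_i-\lambda_j)\omega_{ik}^j$, one can solve the resulting linear system in the three quantities $\omega_{ij}^k,\omega_{jk}^i,\omega_{ki}^j$ for a triple of distinct indices. Because all principal curvatures are distinct, the coefficient determinant is a product of nonzero differences, and I expect this to force $\omega_{ij}^k=0$ whenever $i,j,k$ are three \emph{distinct} indices. That is precisely the content of $\nabla_{e_i}e_j=\sum_{j\neq k,\,k=2}^4\omega_{ij}^k\epsilon_k e_k$ collapsing — actually the lemma keeps these terms, so what must vanish is only the index-$1$ components.

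To pin down the index-$1$ components, I would use Codazzi with one index equal to $1$. From $(\nabla_{e_i}\mathcal{A})e_1=(\nabla_{e_1}\mathcal{A})e_i$ for $i=2,3,4$: the left side produces $e_i(\lambda_1)e_1+(\lambda_1-\lambda_i)\nabla_{e_i}e_1$ and the right side $e_1(\lambda_i)e_i+(\lambda_i-\lambda_1)\nabla_{e_1}e_i$ plus the $e_1$-term. Since $e_i(\lambda_1)=0$, matching the $e_1$-components gives $\omega_{1i}^1$-type terms, and matching $e_k$-components for $k\neq 1,i$ gives $(\lambda_1-\lambda_i)\omega_{i1}^k=(\lambda_i-\lambda_1)\omega_{1i}^k$; using $\lambda_1\neq\lambda_i$ and metric skew-symmetry this yields $\omega_{1i}^k=0$ for $k\neq 1$ and hence $\nabla_{e_1}e_i$ has no $e_k$ component, i.e.\ $\nabla_{e_1}e_i=\omega_{11}^i$-type terms; one more application with $\nabla_{e_1}$ and $g(e_1,e_1)$ constant kills $\omega_{11}^i$, giving $\nabla_{e_1}e_l=0$ for all $l$. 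Feeding this back into the skew relation, $\nabla_{e_i}e_1=-\epsilon_i\omega_{ii}^1 e_i$ follows, and the remaining two displayed formulas are just the definition $(\ref{e:b4})$ with the now-established vanishing terms removed. I expect the main obstacle to be bookkeeping the Codazzi equation across all index patterns $(1,i,k)$ and $(i,j,k)$ cleanly and checking that the relevant $4\times$ (or $3\times3$) coefficient matrices are nonsingular — this is where the ``all distinct principal curvatures'' hypothesis does the real work, and one must be careful that $\lambda_1=-2\varepsilon H$ being non-constant does not reintroduce extra terms through $e_1(\lambda_i)$, which are allowed to be nonzero and simply feed into $(\ref{e:b5})$–$(\ref{e:b6})$ rather than obstructing the vanishing statements.
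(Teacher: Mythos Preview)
Your overall plan (Codazzi in the eigenframe, metric compatibility, and the input $e_i(\lambda_1)=0$ for $i\ge2$) is the right toolkit, but there is a genuine gap in how you force the ``index-$1$'' coefficients to vanish.

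First, a computational slip: from $(\nabla_{e_i}\mathcal A)e_1=(\nabla_{e_1}\mathcal A)e_i$ the $e_k$-component ($k\neq 1,i$) is
\[
(\lambda_1-\lambda_k)\,\omega_{i1}^k=(\lambda_i-\lambda_k)\,\omega_{1i}^k,
\]
not $(\lambda_1-\lambda_i)\omega_{i1}^k=(\lambda_i-\lambda_1)\omega_{1i}^k$. More importantly, even with the correct relation, Codazzi plus metric skew-symmetry alone do \emph{not} close the system. If you run through all index patterns with one index equal to $1$ and use $\omega_{a1}^{b}=-\omega_{ab}^{1}$, $\omega_{1a}^{b}=-\omega_{1b}^{a}$, everything collapses to the single relation
\[
(\lambda_1-\lambda_b)\,\omega_{ab}^{1}=(\lambda_1-\lambda_a)\,\omega_{ba}^{1}\qquad (a,b\in\{2,3,4\},\ a\neq b),
\]
which is one equation in the two unknowns $\omega_{ab}^{1}$, $\omega_{ba}^{1}$; it does not force them to vanish. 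The missing ingredient, which the paper uses, is the symmetry
\[
\omega_{ij}^{1}=\omega_{ji}^{1}\qquad (i,j\in\{2,3,4\}),
\]
obtained from $[e_i,e_j](H)=0$: since $e_i(H)=e_j(H)=0$, one has $0=[e_i,e_j](H)=\epsilon_1(\omega_{ij}^{1}-\omega_{ji}^{1})\,e_1(H)$ and $e_1(H)\neq0$. Combining this symmetry with the displayed Codazzi relation gives $(\lambda_a-\lambda_b)\omega_{ab}^{1}=0$, hence $\omega_{ab}^{1}=0$, and then $\omega_{a1}^{b}=0$ and $\omega_{1a}^{b}=0$ follow from skew-symmetry and Codazzi. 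Together with $\omega_{1i}^{1}=0$ (which you do get from $e_i(\lambda_1)=0$) this yields $\nabla_{e_1}e_l=0$ and $\nabla_{e_i}e_1=-\epsilon_i\omega_{ii}^{1}e_i$. So the specific fact that $e_1$ is the direction of $\grad H$ is used \emph{twice}: once to kill $e_i(\lambda_1)$, and once (via the Lie bracket) to produce the extra symmetry that your argument is missing.
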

\begin{proof}
Using (\ref{e:b4}) and the compatibility conditions
$(\nabla_{e_{k}}g)(e_{i}, e_{i})= 0$ and $(\nabla_{e_{k}}g)(e_{i},
e_{j})= 0$, we obtain
\begin{equation}\label{e:b5}
\omega_{ki}^{i}=0, \quad \omega_{ki}^{j}+ \omega_{kj}^{i} =0,
\end{equation}
for $i \neq j, $ and $i, j, k = 1, \dots, 4$. \\

Taking $X=e_{i}, Y=e_{j}$ in (\ref{e:a9}) and using (\ref{e:b1}),
(\ref{e:b4}), we get
$$
(\nabla_{e_{i}}A)e_{j}=e_{i}(\lambda_{j})e_{j}+\sum_{k=1}^{n}(\lambda_{j}-\lambda_{k})\omega_{ij}^{k}\epsilon_{k} e_{k}.
$$

Putting the value of $(\nabla_{e_{i}}A)e_{j}$ in (\ref{e:a8}), we
find
$$e_{i}(\lambda_{j})e_{j}+\sum_{k=1}^{n}(\lambda_{j}-\lambda_{k}) \omega_{ij}^{k}\epsilon_{k} e_{k}
=e_{j}(\lambda_{i})e_{i}+\sum_{k=1}^{n}(\lambda_{i}-\lambda_{k})\omega_{ji}^{k}
\epsilon_{k} e_{k}.$$ 
Then  for $i\neq j=k$ and $i\neq j \neq k$
we obtain the equations
\begin{equation}\label{e:b6}
\epsilon_{j}e_{i}(\lambda_{j})=
(\lambda_{i}-\lambda_{j})\omega_{ji}^{j},
\end{equation}
and
\begin{equation}\label{e:b7}
(\lambda_{i}-\lambda_{j})\omega_{ki}^{j}=
(\lambda_{k}-\lambda_{j})\omega_{ik}^{j},
\end{equation}
respectively.\\

 Since $\lambda_{1}=-2\varepsilon H$, from (\ref{e:b3}) we get
\begin{equation}\label{e:b8}
e_{1}(\lambda_{1})\neq 0, e_{2}(\lambda_{1})= 0, e_{3}(\lambda_{1})
= 0, e_{4}(\lambda_{1})= 0.
\end{equation}

Using (\ref{e:b3}), (\ref{e:b4}) and the fact that $[e_{i}
\hspace{.1 cm}
e_{j}](H)=0=\nabla_{e_{i}}e_{j}(H)-\nabla_{e_{j}}e_{i}(H)=\epsilon_{1}\omega_{ij}^{1}e_{1}(H)-\epsilon_{1}
\omega_{ji}^{1}e_{1}(H),$ we find
\begin{equation}\label{e:b9}
\omega_{ij}^{1}= \omega_{ji}^{1},
\end{equation}
for $i \neq j$ and $i, j = 2, 3, 4$.

 Now, putting $i\neq 1, j = 1$ in (\ref{e:b6}) and using (\ref{e:b8}) and (\ref{e:b5}), we
 find
\begin{equation}\label{e:b12}
\omega_{1i}^{1}= 0, \hspace{1 cm}  i= 1, 2, 3, 4.
\end{equation}

 Also, putting $k = 1, j\neq i, $ and $ i, j = 2, 3, 4$ in (\ref{e:b7}), and using (\ref{e:b9}), we
 get
\begin{equation}\label{e:b13}
\omega_{ij}^{1}=\omega_{ji}^{1} = \omega_{1i}^{j}= \omega_{i1}^{j}=
0, \quad j\neq i\ \mbox{and}\   i, j = 2, 3, 4.
\end{equation}

Finally, using (\ref{e:b4}), (\ref{e:b12}) and (\ref{e:b13}) we conclude the proof of the lemma.
\end{proof}

\medskip
For later use we show that $\lambda_{j}\neq \lambda_{1}, j= 2, 3, 4.$ Indeed,
 if $\lambda_{j}= \lambda_{1}$ for $j\neq 1$, then from (\ref{e:b6}), we
 find that
\begin{equation}\label{e:b10}
\epsilon_{j} e_{1}(\lambda_{j})=
(\lambda_{1}-\lambda_{j})\omega_{j1}^{j}=0,
\end{equation}
which contradicts the first expression of (\ref{e:b8}).


\medskip
  We denote by $\beta$ the squared norm of second fundamental form. Then, from (\ref{e:b1}), we find that
\begin{equation}\label{l1}
\beta=\lambda_1^2+\sum_{j=2}^4\lambda_j^2.
\end{equation}

Using (\ref{e:a12}), (\ref{e:b3}), Lemma \ref{L1}  and (\ref{l1}) in (\ref{e:a10}), we obtain
\begin{equation}\label{e:b17}
  e_{1}e_{1}(\lambda_1)= Q e_{1}(\lambda_1)+\epsilon_{1}\lambda_1(
 \varepsilon \beta-\lambda),
\end{equation}
 where $Q=\sum_{j=2}^4\epsilon_{j}\omega_{jj}^{1}$.

Using Lemma \ref{L1} and the fact that $[e_{i},\ e_{1}]= \nabla_{e_{i}}e_{1}-\nabla_{e_{1}}e_{i}$, we obtain
 \begin{equation}\label{v1}
 e_{i}e_{1}-e_1e_i=-\epsilon_i \omega_{ii}^{1}e_i,\quad
 i=2, 3, 4.
\end{equation}
 
  Using (\ref{e:b3}), and acting  (\ref{v1}) at $\lambda_1$, 
 we find
\begin{equation}\label{e:b18}
 e_{i}e_{1}(\lambda_1)= 0,\quad
 i=2, 3, 4.
\end{equation}

 Also, acting (\ref{v1}) at $e_1(\lambda_1)$ and using (\ref{e:b18}), 
 we
obtain
\begin{equation}\label{e:b19}
 e_{i}e_{1}e_{1}(\lambda_1)= 0, \quad i= 2, 3, 4.
\end{equation}
 
 Similarly, we can show that
\begin{equation}\label{120}
 e_{i}e_1e_{1}e_{1}(\lambda_1)= 0,   e_{i}e_1e_1e_{1}e_{1}(\lambda_1)= 0,\dots,  e_{i}e_1e_1\cdots e_{1}(\lambda_1)=0, \quad i= 2, 3, 4.
\end{equation}

Differentiating (\ref{e:b17}) with respect $e_i$ for $i=2,3,4$ and using (\ref{e:b3}), (\ref{e:b18}) and (\ref{e:b19}), we get
\begin{equation}\label{l3}
 e_{i}(Q) e_{1}(\lambda_1)= -\epsilon_1\varepsilon \lambda_1 e_i(\beta), \quad i= 2, 3, 4.
\end{equation}
Next, we have:

\begin{lem}\label{L2}
 Let $M^{4}_{r}$ be a hypersurface satisfying $(\ref{e:a2})$ of non-constant mean curvature with all distinct principal curvatures in
 $E^{5}_{s}$, having the shape operator given by $(\ref{e:b1})$ with respect to suitable orthonormal frame  $\{e_{1},
e_{2}, e_{3}, e_{4}\}$. Then the following differentiations are valid: 
\begin{equation}\label{l4}
\frac{ e_{1}(\lambda_1)}{\epsilon_1\lambda_1}= -\frac{\varepsilon  e_i(\beta)}{ e_{i}(Q)}=\rho, \end{equation}
\begin{equation}\label{l5}
 e_{1}(\lambda_1)=\epsilon_1\rho \lambda_1, \quad \varepsilon  e_i(\beta)=- \rho e_{i}(Q), \end{equation}
\begin{equation}\label{l6}
e_i(\rho)=0, \quad e_ie_1(\rho)=0, \quad e_ie_1e_1(\rho)=0, 
\end{equation}
\begin{equation}\label{l7}
e_i(\rho Q+\varepsilon \beta)=0, 
\end{equation}
\begin{equation}\label{l8}
\rho Q+\varepsilon \beta=T, 
\end{equation}
\begin{equation}\label{l9}
e_i(T)=0, \quad e_ie_1(T)=0, \quad e_ie_1e_1(T)=0, 
\end{equation}
\begin{equation}\label{l10}
e_1e_1(\lambda_1)=\epsilon_1(T-\lambda) \lambda_1
\end{equation}
\begin{equation}\label{l11}
e_1(\rho)=T-\lambda-\epsilon_1 \rho^2,
\end{equation}
for $i=2, 3, 4$ and for $\rho$, $T$ some smooth functions of $\lambda_1$.
\end{lem}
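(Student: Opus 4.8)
The plan is to obtain essentially all of the assertions as formal consequences of the relations $(\ref{e:b17})$, $(\ref{v1})$, $(\ref{e:b18})$, $(\ref{e:b8})$ and $(\ref{l3})$ once the function $\rho$ has been introduced; the only genuinely non-algebraic point is that a quantity annihilated by $e_2,e_3,e_4$ must be a function of $\lambda_1$ alone. I would first restrict to the open dense subset of $U$ on which $\lambda_1\neq0$ --- legitimate because $e_1(\lambda_1)\neq0$ forces $\lambda_1$, hence $H$, to be non-constant --- and set $\rho:=e_1(\lambda_1)/(\epsilon_1\lambda_1)$. Multiplying by $\epsilon_1\lambda_1$ gives the first identity of $(\ref{l5})$; substituting $e_1(\lambda_1)=\epsilon_1\rho\lambda_1$ into $(\ref{l3})$ gives $\epsilon_1\rho\lambda_1\,e_i(Q)=-\epsilon_1\varepsilon\lambda_1\,e_i(\beta)$, hence $\varepsilon e_i(\beta)=-\rho\,e_i(Q)$, which is the second identity of $(\ref{l5})$ and, where $e_i(Q)\neq0$, the ratio form $(\ref{l4})$. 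Differentiating the definition of $\rho$ along $e_i$ and using $(\ref{e:b18})$ and $(\ref{e:b8})$ yields $e_i(\rho)=0$; since $e_i(\rho)$ and then $e_ie_1(\rho)$ vanish identically, two successive applications of the commutator $(\ref{v1})$, each trading $e_ie_1$ for $e_1e_i$ up to a multiple of $e_i$, give $e_ie_1(\rho)=0$ and $e_ie_1e_1(\rho)=0$; this is $(\ref{l6})$.

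Next, $(\ref{l7})$ should follow by expanding $e_i(\rho Q+\varepsilon\beta)=e_i(\rho)Q+\rho\,e_i(Q)+\varepsilon e_i(\beta)$ and cancelling via $(\ref{l6})$ and $(\ref{l5})$. For $(\ref{l8})$ I would argue that $\grad\lambda_1=\epsilon_1 e_1(\lambda_1)e_1$ (by $(\ref{e:b8})$) is nowhere zero on this set, so $e_2,e_3,e_4$ span the orthogonal complement of $\grad\lambda_1$ and $\lambda_1$ can be completed to a local coordinate system whose remaining coordinate vector fields span that same distribution; hence any function annihilated by $e_2,e_3,e_4$ depends on $\lambda_1$ only. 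Applying this to $\rho$ (by $(\ref{l6})$) and to $\rho Q+\varepsilon\beta$ (by $(\ref{l7})$) produces smooth $\rho=\rho(\lambda_1)$ and $T=T(\lambda_1)$ with $\rho Q+\varepsilon\beta=T$, which is $(\ref{l8})$; then $(\ref{l9})$ is the same $(\ref{v1})$-propagation argument as for $(\ref{l6})$, now starting from $e_i(T)=0$, which is exactly $(\ref{l7})$.

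It then remains to record the two $e_1$-derivative identities. Substituting $e_1(\lambda_1)=\epsilon_1\rho\lambda_1$ from $(\ref{l5})$ and $\rho Q+\varepsilon\beta=T$ from $(\ref{l8})$ into $(\ref{e:b17})$ gives
$$
e_1e_1(\lambda_1)=\epsilon_1\rho Q\lambda_1+\epsilon_1\lambda_1(\varepsilon\beta-\lambda)=\epsilon_1\lambda_1(\rho Q+\varepsilon\beta-\lambda)=\epsilon_1\lambda_1(T-\lambda),
$$
which is $(\ref{l10})$; and differentiating $e_1(\lambda_1)=\epsilon_1\rho\lambda_1$ along $e_1$ yields $e_1e_1(\lambda_1)=\epsilon_1\lambda_1\,e_1(\rho)+\epsilon_1\rho\,e_1(\lambda_1)=\epsilon_1\lambda_1\,e_1(\rho)+\rho^2\lambda_1$, so equating with $(\ref{l10})$ and cancelling the nonzero factor $\lambda_1$ gives $\epsilon_1 e_1(\rho)+\rho^2=\epsilon_1(T-\lambda)$, i.e.\ $e_1(\rho)=T-\lambda-\epsilon_1\rho^2$, which is $(\ref{l11})$. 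I expect the only real obstacle to be the functional-dependence step behind $(\ref{l8})$, together with the mild care needed to restrict to $\{\lambda_1\neq0\}$ and to address the case $e_i(Q)\equiv0$ in the ratio $(\ref{l4})$; everything else is differentiation and substitution using Lemma~\ref{L1} together with $(\ref{v1})$, $(\ref{e:b17})$ and $(\ref{l3})$.
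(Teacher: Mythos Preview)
Your proof is correct and follows essentially the same route as the paper: define $\rho$ from the ratio, obtain $(\ref{l5})$--$(\ref{l7})$ by differentiation and use of $(\ref{v1})$, introduce $T$ via $(\ref{l8})$, and then read off $(\ref{l10})$ and $(\ref{l11})$ from $(\ref{e:b17})$. You are in fact a bit more careful than the paper on two points---restricting to $\{\lambda_1\neq0\}$ and noting that the ratio form of $(\ref{l4})$ is only meaningful where $e_i(Q)\neq0$---and you spell out the functional-dependence argument that the paper merely asserts; note that this step works simply because $d\rho$ and $dT$ are multiples of the dual of $e_1$, as is $d\lambda_1\neq0$, so integrability of the orthogonal distribution (which does hold here, since the level sets of $\lambda_1$ are tangent to $e_2,e_3,e_4$) is not even strictly required.
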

\begin{proof}  Rewriting  (\ref{l3}) and denoting the ratio with some smooth function $\rho$, we get (\ref{l4}), from which  (\ref{l5}) is immediate. 

Next, differentiating (\ref{l5}) with respect to $e_i$ and using (\ref{e:b3}) and (\ref{e:b18}), we find 
\begin{equation}\label{l12}
e_i(\rho)=0,
\end{equation}
for $i=2,3,4$.

 Using (\ref{l12}) and acting (\ref{v1}) at $\rho$, we
obtain
\begin{equation}\label{l13}
 e_{i}e_{1}(\rho)= 0, \quad i= 2, 3, 4.
\end{equation}
 
 Again using (\ref{l13}) and (\ref{v1}), we find (\ref{l6}).

Rewriting the second relation of equation (\ref{l5}) and using (\ref{l6}), we obtain (\ref{l7}). As a consequence of (\ref{l7}), we have (\ref{l8}).

Using (\ref{l8}) in (\ref{l7}), we get $e_i(T)=0$. Using this and acting  (\ref{v1}) at $T$, 
we obtain $e_ie_1(T)=0$. Further, acting  (\ref{v1}) at $e_1(T)$, 
we obtain (\ref{l9}).

Using the first relation of (\ref{l5}) and (\ref{l8}) in (\ref{e:b17}), we find (\ref{l10}).

Differentiating the first relation of (\ref{l5}) with respect to $e_1$ and using  (\ref{l5}) and  (\ref{l10}), we get (\ref{l11}).

In view of (\ref{e:b3}), (\ref{l6}) and (\ref{l9}), we find that $\rho$ and $T$ are smooth functions of 
$\lambda_1$, which completes the proof.
\end{proof}

  The following lemma gives further simplification for $\omega _{ij}^k$:
\begin{lem}\label{w}
 Let $M^{4}_{r}$ be a hypersurface satisfying $(\ref{e:a2})$ of non constant mean curvature with all distinct principal curvatures in
 $E^{5}_{s}$, having the shape operator given by $(\ref{e:b1})$ with respect to suitable orthonormal frame  $\{e_{1},
e_{2}, e_{3}, e_{4}\}$. Then, $$\omega_{44}^{2}=0=\omega_{33}^{2},
\quad \omega_{22}^{3}=0=\omega_{44}^{3}, \quad
\omega_{22}^{4}=0=\omega_{33}^{4}.$$
\end{lem}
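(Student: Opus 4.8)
\textbf{Proof strategy for Lemma \ref{w}.}
The plan is to exploit the Codazzi equation in the form of \eqref{e:b7}, which relates the off-diagonal connection coefficients $\omega_{ki}^{j}$ among the indices $2,3,4$, together with the constancy results already established for the principal curvatures implicit in Lemma \ref{L2}. First I would apply \eqref{e:b7} with $i,j,k$ ranging over $\{2,3,4\}$ in all admissible ways; since all four principal curvatures are distinct, none of the coefficients $\lambda_i-\lambda_j$ vanishes, so each such relation expresses $\omega_{ki}^{j}$ as a nonzero multiple of $\omega_{ik}^{j}$. Combined with the skew-symmetry $\omega_{ki}^{j}+\omega_{kj}^{i}=0$ from \eqref{e:b5}, these cyclic relations on three indices typically force the coefficients to be proportional to each other in a way that, after going around the cycle, yields a linear system whose only solution is zero unless the $\lambda_i$ satisfy some special identity.

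The key computational step is to differentiate the relation \eqref{e:b6} (with $j=1$, i.e. $\epsilon_j e_i(\lambda_1)=(\lambda_i-\lambda_1)\omega_{i1}^{i}$ rearranged, or better the relations $\epsilon_j e_1(\lambda_j)=(\lambda_1-\lambda_j)\omega_{j1}^{j}$) along the frame directions $e_2,e_3,e_4$, and to feed in the structural vanishing already proved: $e_i(\lambda_1)=0$ from \eqref{e:b8}, together with \eqref{e:b18}, \eqref{e:b19}. The point is that $\lambda_2,\lambda_3,\lambda_4$ are determined (via \eqref{e:b11} and the expression \eqref{l1} for $\beta$, using $\beta=T$-type relations from Lemma \ref{L2}) by $\lambda_1$ alone, hence $e_i(\lambda_j)=0$ for all $i\in\{2,3,4\}$ and all $j$. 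Plugging $e_i(\lambda_j)=0$ into \eqref{e:b6} with the roles of the indices permuted gives $(\lambda_j-\lambda_k)\omega_{jk}^{k}=0$ for the relevant index pairs, and distinctness of principal curvatures then kills exactly the coefficients $\omega_{44}^{2}, \omega_{33}^{2}, \omega_{22}^{3}, \omega_{44}^{3}, \omega_{22}^{4}, \omega_{33}^{4}$.

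More carefully, I would argue as follows. From Lemma \ref{L2} we know $\beta$ is (locally) a function of $\lambda_1$; but $\beta=\lambda_1^2+\sum_{j=2}^4\lambda_j^2$ while $\sum_{j=2}^4\lambda_j=-3\lambda_1$ is linear in $\lambda_1$. This does not immediately pin down each $\lambda_j$, so instead I would also use $\trace(\mathcal{A}^3)=\lambda_1^3+\sum_{j=2}^4\lambda_j^3$: applying $\triangle$ repeatedly to $H$ and using \eqref{e:a10}-type identities, or simply differentiating the relations of Lemma \ref{L2} once more, one obtains that $\sum_j\lambda_j^3$ is a function of $\lambda_1$ as well, and the three power sums $p_1,p_2,p_3$ of $\lambda_2,\lambda_3,\lambda_4$ being functions of $\lambda_1$ forces each elementary symmetric function, hence each $\lambda_j$ (as roots of a cubic with coefficients depending only on $\lambda_1$) to be a function of $\lambda_1$, so $e_i(\lambda_j)=0$ for $i=2,3,4$. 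Then \eqref{e:b6} applied with $(i,j)\to(i,j)$ for $i,j\in\{2,3,4\}$, $i\neq j$, gives $0=\epsilon_j e_i(\lambda_j)=(\lambda_i-\lambda_j)\omega_{ji}^{j}$, whence $\omega_{ji}^{j}=0$ for all distinct $i,j\in\{2,3,4\}$; in our notation $\omega_{ji}^{j}$ with $(j,i)=(4,2),(3,2),(2,3),(4,3),(2,4),(3,4)$ are precisely $\omega_{44}^{2}$ is not of this shape — so I must instead use \eqref{e:b5} to convert: $\omega_{ii}^{l}$ is symmetric-free, and the needed coefficients $\omega_{ii}^{l}$ with $i,l\in\{2,3,4\}$, $i\neq l$, are related to $\omega_{il}^{i}$ by $\omega_{ii}^{l}=-\omega_{il}^{i}=0$. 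I expect the main obstacle to be exactly this bookkeeping: correctly tracking which Codazzi relation \eqref{e:b6}--\eqref{e:b7} gives which vanishing, and ensuring that the argument that $\lambda_2,\lambda_3,\lambda_4$ are functions of $\lambda_1$ alone is airtight (rather than merely that their power sums are), since it is the distinctness of the principal curvatures that makes the recovery of the individual $\lambda_j$ from their symmetric functions legitimate.
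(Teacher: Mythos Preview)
Your endgame is correct and matches the paper exactly: one shows $e_i(\lambda_j)=0$ for $i,j\in\{2,3,4\}$ and then \eqref{e:b6} together with \eqref{e:b5} kills the six coefficients $\omega_{ii}^{l}$ (your bookkeeping at the end is fine: $\omega_{ii}^{l}=-\omega_{il}^{i}$, and $\omega_{il}^{i}=\omega_{ji}^{j}|_{j=i}$ is exactly what \eqref{e:b6} annihilates once $e_l(\lambda_i)=0$).

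The gap is in how you propose to reach $e_i(\lambda_j)=0$. You assert that ``from Lemma~\ref{L2} we know $\beta$ is a function of $\lambda_1$'', but Lemma~\ref{L2} does \emph{not} give this: it only shows that $\rho$ and $T=\rho Q+\varepsilon\beta$ are functions of $\lambda_1$, where $Q=b_2+b_3+b_4$ involves the connection coefficients $b_j=\epsilon_j\omega_{jj}^{1}$. So neither $\beta$ nor $Q$ is individually known to depend on $\lambda_1$ alone at this stage, and your second power sum $p_2$ is not yet available. Your proposed route to $p_3$ (``apply $\triangle$ repeatedly'' or ``differentiate Lemma~\ref{L2} once more'') suffers from the same problem: every differentiation in the $e_1$-direction reintroduces the $b_j$'s via \eqref{e:b6} and \eqref{e:b14}, so one never isolates a symmetric function of the $\lambda_j$'s alone.

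This is precisely why the paper's proof is long: it treats $b_2,b_3,b_4,\lambda_2,\lambda_3,\lambda_4$ as six unknowns coupled through \eqref{e:b11}, \eqref{l8}, \eqref{l15}, \eqref{e:b14}, and two further $e_1$-derivatives thereof, and then performs a systematic resultant-style elimination (first of $b_2,\lambda_2$, then of $b_3$, then of $b_4$) to produce polynomial relations $F_1(\lambda_1,\lambda_3,\lambda_4)=0$, $F_2(\lambda_1,\lambda_3,\lambda_4)=0$, and finally $F_3(\lambda_1,\lambda_4)=0$ with coefficients depending only on $\lambda_1$. Only at that point can one argue $e_i(\lambda_4)=0$ (by contradiction, differentiating $F_3$ in $e_i$), then bootstrap back to $e_i(\lambda_3)=0$ and $e_i(\lambda_2)=0$. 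Your symmetric-function idea is morally the same goal, but it cannot be executed without this elimination of the $b_j$'s, which is the real content of the lemma.
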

\begin{proof} For simplicity, we use $b_2$,  $b_3$, and $b_4$ to denote
\begin{eqnarray}\label{l14}
\epsilon_2\omega_{22}^{1}=b_2,\quad \epsilon_3\omega_{33}^{1}=b_3,\quad \epsilon_4\omega_{44}^{1}=b_4,
\end{eqnarray}
respectively.

Differentiating (\ref{e:b11}) with respect to $e_1$ and using (\ref{e:b6}) and (\ref{l5}), we get
\begin{eqnarray}\label{l15}
\sum_{j=2}^{4} b_j (\lambda_j-\lambda_1)=-3\rho \epsilon_1\lambda_1.
\end{eqnarray}

Eliminating $b_2$ from (\ref{l15}) using (\ref{l8}), we obtain
\begin{eqnarray}\label{l16}
\lambda _2 \left(\beta  \varepsilon +b_3 \rho +b_4 \rho -T\right)+\lambda _1 \left(-\beta  \varepsilon +T-3 \rho ^2 \epsilon _1\right) =\rho  \left(b_3 \lambda _3+b_4 \lambda _4\right).
\end{eqnarray}

Eliminating $\lambda_2$ from (\ref{l16}) using (\ref{e:b11}), we find
\begin{eqnarray}\label{l17}
\lambda _3 \left(\beta  \varepsilon +2 b_3 \rho +b_4 \rho -T\right)+\lambda _4 \left(\beta  \varepsilon +b_3 \rho +2 b_4 \rho -T\right)\\+\lambda _1 \left(4 \beta  \varepsilon +3 b_3 \rho +3 b_4 \rho -4 T+3 \rho ^2 \epsilon _1\right)=0.\nonumber
\end{eqnarray}

Evaluating $g(R(e_{1},e_{i})e_{1},e_{i})$,
 and $g(R(e_{i},e_{j})e_{i},e_{1})$,
using Lemma \ref{L1}, (\ref{e:a7}) and (\ref{e:b1}) with respect to a
suitable orthonormal frame  $\{e_{1}, e_{2}, e_{3}, e_{4}\}$, we
find that
\begin{equation}\label{e:b14}
 e_{1}(\omega_{ii}^{1})\epsilon_{i}- (\omega_{ii}^{1})^{2}=  \epsilon_{1} \lambda_{1} \lambda_{i}, \quad i = 2, 3,
 4.
\end{equation}
Differentiating (\ref{l8}) with respect to $e_1$ and using (\ref{e:b6}), (\ref{e:b11}), (\ref{e:b14}) and (\ref{l5}), we obtain
\begin{eqnarray}\label{l18}
2 \varepsilon  \left(b_2 \lambda _2 \left(\lambda _2-\lambda _1\right)+b_3 \lambda _3 \left(\lambda _3-\lambda _1\right)+b_4 \lambda _4 \left(\lambda _4-\lambda _1\right)+\lambda _1^2 \rho  \epsilon _1\right)\\+\rho  \left(b_2^2+b_3^2+b_4^2-3 \lambda _1^2 \epsilon _1\right)+\left(b_2+b_3+b_4\right) \rho _1=T_1,\nonumber
\end{eqnarray}
where $\rho_1=e_1(\rho)$ and $T_1=e_1(T)$ are functions of $\lambda_1$ in view of (\ref{l6}) and (\ref{l9}).

Eliminating $b_2$ from (\ref{l18}) using (\ref{l8}), we obtain
\begin{eqnarray}\label{l19}
2 \varepsilon  \rho  \left(b_4 \left(\beta -\lambda _2^2+\lambda _4^2+\lambda _1 \left(\lambda _2-\lambda _4\right)\right)+\lambda _1^2 \rho  \epsilon _1\right)+2 \beta  \lambda _1 \lambda _2+2 b_3^2 \rho ^2+2 b_4^2 \rho ^2\\ \nonumber +\beta ^2+2 b_3 \rho  \left(\varepsilon  \left(\beta -\lambda _2^2+\lambda _3^2+\lambda _1 \left(\lambda _2-\lambda _3\right)\right)+b_4 \rho -T\right)+T^2+\rho _1 T\\=\beta  \varepsilon  \rho _1+2 \beta  \lambda _2^2+2 b_4 \rho  T+2 \varepsilon  T \left(\beta -\lambda _2^2+\lambda _1 \lambda _2\right)+\rho  T_1+3 \lambda _1^2 \rho ^2 \epsilon _1.\nonumber
\end{eqnarray}

Eliminating $\lambda_2$ from (\ref{l19}) using (\ref{e:b11}), we find
\begin{eqnarray}\label{l20}
\varepsilon  \left(-\beta  \rho _1-2 \beta  T+24 \lambda _1^2 T+14 \left(\lambda _3+\lambda _4\right) \lambda _1 T+2 \lambda _3^2 T+2 \lambda _4^2 T+4 \lambda _3 \lambda _4 T\right)\nonumber
\\+\rho _1 T+2 \varepsilon  \lambda _1^2 \rho ^2 \epsilon _1+\beta ^2+2 b_3^2 \rho ^2+2 b_4^2 \rho ^2+T^2 =24 \beta  \lambda _1^2+14 \beta  \lambda _3 \lambda _1+14 \beta  \lambda _4 \lambda _1\\ +2 \beta  \lambda _4^2+4 \beta  \lambda _3 \lambda _4+2 b_3 \rho  \left(\varepsilon  \left(-\beta +12 \lambda _1^2+\left(8 \lambda _3+7 \lambda _4\right) \lambda _1+\lambda _4^2+2 \lambda _3 \lambda _4\right)-b_4 \rho +T\right)\nonumber \\+2 \beta  \lambda _3^2+2 b_4 \rho  \left(\varepsilon  \left(-\beta +12 \lambda _1^2+\left(7 \lambda _3+8 \lambda _4\right) \lambda _1+\lambda _3^2+2 \lambda _3 \lambda _4\right)+T\right)+\rho  T_1+3 \lambda _1^2 \rho ^2 \epsilon _1
\nonumber
\end{eqnarray}

Eliminating $b_3$ from (\ref{l20}) using (\ref{l17}), we find
\begin{equation}\label{l21}
c_1+c_2 b_4+c_3b_4^2=0,
\end{equation}
where
\begin{eqnarray}
c_1&=&d_5 \lambda _3^5+d_4 \lambda _3^4+d_3 \lambda _3^3+d_2 \lambda _3^2+d_1 \lambda _3+d_0-8 \lambda _3^6,\nonumber \\
d_0&=&a_4 \lambda _4^4+a_3 \lambda _4^3+a_2 \lambda _4^2+a_1 \lambda _4+a_0+4 \lambda _4^6+60 \lambda _1 \lambda _4^5,\nonumber\\
a_0&=&2420 \lambda _1^6+\lambda _1^2 \left(18 \rho ^4+26 T^2-9 \lambda  T-9 \rho  T_1-39 \rho ^2 T \epsilon _1\right)+\lambda _1^4 \left(90 \varepsilon  \lambda -502 \varepsilon  T+3 (208 \varepsilon -9) \rho ^2 \epsilon _1\right)\nonumber\\
a_1&=&114 \varepsilon  \lambda  \lambda _1^3+3932 \lambda _1^5+14 \lambda _1 T^2-544 \varepsilon  \lambda _1^3 T-6 \lambda _1 \rho  T_1-6 \lambda  \lambda _1 T-12 \lambda _1 \rho ^2 T \epsilon _1+6 (94 \varepsilon -3) \lambda _1^3 \rho ^2 \epsilon _1,\nonumber\\
a_2&=&64 \varepsilon  \lambda  \lambda _1^2+3092 \lambda _1^4+2 T^2-268 \varepsilon  \lambda _1^2 T-\lambda  T-\rho  T_1-\rho ^2 T \epsilon _1+222 \varepsilon  \lambda _1^2 \rho ^2 \epsilon _1-3 \lambda _1^2 \rho ^2 \epsilon _1,\nonumber\\
a_3&=&18 \varepsilon  \lambda  \lambda _1+1408 \lambda _1^3-64 \varepsilon  \lambda _1 T+36 \varepsilon  \lambda _1 \rho ^2 \epsilon _1,\nonumber\\
a_4&=&2 \varepsilon  \lambda +388 \lambda _1^2-6 \varepsilon  T+2 \varepsilon  \rho ^2 \epsilon _1,\nonumber
\end{eqnarray}
\begin{eqnarray}
d_1&=&a_8 \lambda _4^3+a_7 \lambda _4^2+a_6 \lambda _4+a_5+12 \lambda _4^5+156 \lambda _1 \lambda _4^4,\nonumber\\
a_5&=&3492 \lambda _1^5+\lambda _1 \left(18 T^2-12 \lambda  T-12 \rho  T_1-12 \rho ^2 T \epsilon _1\right)+\lambda _1^3 \left(174 \varepsilon  \lambda -540 \varepsilon  T+18 (37 \varepsilon -2) \rho ^2 \epsilon _1\right),\nonumber\\
a_6&=& 166 \varepsilon  \lambda  \lambda _1^2+4596 \lambda _1^4+6 T^2-444 \varepsilon  \lambda _1^2 T-4 \lambda  T-4 \rho  T_1-4 \rho ^2 T \epsilon _1+438 \varepsilon  \lambda _1^2 \rho ^2 \epsilon _1-12 \lambda _1^2 \rho ^2 \epsilon _1,\nonumber\\
a_7&=&66 \varepsilon  \lambda  \lambda _1+2776 \lambda _1^3-148 \varepsilon  \lambda _1 T+102 \varepsilon  \lambda _1 \rho ^2 \epsilon _1,\nonumber\\
a_8&=&10 \varepsilon  \lambda +904 \lambda _1^2-20 \varepsilon  T+10 \varepsilon  \rho ^2 \epsilon _1,\nonumber\\
d_2&=&a_{11} \lambda _4^2+a_{10} \lambda _4+a_9+12 \lambda _4^4+144 \lambda _1 \lambda _4^3,\nonumber \\ a_9&=&130 \varepsilon  \lambda  \lambda _1^2+2028 \lambda _1^4+6 T^2-252 \varepsilon  \lambda _1^2 T-4 \lambda  T-4 \rho  T_1-4 \rho ^2 T \epsilon _1+294 \varepsilon  \lambda _1^2 \rho ^2 \epsilon _1-12 \lambda _1^2 \rho ^2 \epsilon _1,\nonumber\\
a_{10}&=&84 \varepsilon  \lambda  \lambda _1+1936 \lambda _1^3-136 \varepsilon  \lambda _1 T+120 \varepsilon  \lambda _1 \rho ^2 \epsilon _1,\nonumber\\
a_{11}&=&18 \varepsilon  \lambda +784 \lambda _1^2-28 \varepsilon  T+18 \varepsilon  \rho ^2 \epsilon _1,\nonumber\\
d_3&=&a_{13} \lambda _4+a_{12}-8 \lambda _4^3-24 \lambda _1 \lambda _4^2,\nonumber\\
a_{12}&=&48 \varepsilon  \lambda  \lambda _1+360 \lambda _1^3-48 \varepsilon  \lambda _1 T+48 \varepsilon  \lambda _1 \rho ^2 \epsilon _1,\nonumber \\ 
a_{13}&=&16 \varepsilon  \lambda +120 \lambda _1^2-16 \varepsilon  T+16 \varepsilon  \rho ^2 \epsilon _1,\nonumber \\
d_4&=&a_{14}-24 \lambda _4^2-120 \lambda _1 \lambda _4,\nonumber\\
a_{14}&=&8 \varepsilon  \lambda -120 \lambda _1^2-8 \varepsilon  T+8 \varepsilon  \rho ^2 \epsilon _1,\nonumber \\ c_2&=&d_8 \lambda _3^2+d_7 \lambda _3+d_6,\nonumber
\end{eqnarray}
\begin{eqnarray}
d_6&=&a_{17} \lambda _4^2+a_{16} \lambda _4+a_{15}+12 \varepsilon  \lambda _4^4 \rho +102 \varepsilon  \lambda _1 \lambda _4^3 \rho ,\nonumber\\
a_{15}&=&240 \varepsilon  \lambda _1^4 \rho -24 \lambda _1^2 \rho  T+18 \lambda _1^2 \rho ^3 \epsilon _1,\nonumber a_{16}=438 \varepsilon  \lambda _1^3 \rho -24 \lambda _1 \rho  T+18 \lambda _1 \rho ^3 \epsilon _1,\nonumber\\
a_{17}&=&304 \varepsilon  \lambda _1^2 \rho -4 \rho  T,\nonumber
d_7=a_{19} \lambda _4+a_{18}+30 \varepsilon  \lambda _4^3 \rho +174 \varepsilon  \lambda _1 \lambda _4^2 \rho,\nonumber\\
a_{18}&=&210 \varepsilon  \lambda _1^3 \rho -12 \lambda _1 \rho  T,\nonumber
a_{19}=322 \varepsilon  \lambda _1^2 \rho -4 \rho  T,\nonumber
d_8=a_{20}+30 \varepsilon  \lambda _4^2 \rho +84 \varepsilon  \lambda _1 \lambda _4 \rho ,\nonumber\\
a_{20}&=&70 \varepsilon  \lambda _1^2 \rho -4 \rho  T,\nonumber
c_3=d_{10} \lambda _3+d_9+6 \lambda _3^2 \rho ^2,\nonumber
d_9=6 \left(3 \lambda _1^2+3 \lambda _4 \lambda _1+\lambda _4^2\right) \rho ^2, \nonumber 
d_{10}=6 \left(3 \lambda _1+\lambda _4\right) \rho ^2.\nonumber
\end{eqnarray}

Differentiating (\ref{l15}) with respect to $e_1$ and using (\ref{e:b6}) and (\ref{l5}), we get
\begin{eqnarray}\label{l22} 
\lambda _1 (b_2 \rho  \epsilon _1+b_3 \rho  \epsilon _1+b_4 \rho  \epsilon _1+2 b_2^2+2 b_3^2+2 b_4^2-3 \rho ^2-\lambda _2^2 \epsilon _1-\lambda _3^2 \epsilon _1-\lambda _4^2 \epsilon _1\\+\lambda _1 (\lambda _2+\lambda _3+\lambda _4) \epsilon _1-3 \rho _1 \epsilon _1)=2 \left(b_2^2 \lambda _2+b_3^2 \lambda _3+b_4^2 \lambda _4\right).\nonumber \end{eqnarray}

Eliminating $b_2$ from (\ref{l22}) using (\ref{l8}), we obtain
\begin{eqnarray}\label{l23}
2 (\lambda _2 \left(\beta ^2+b_3^2 \rho ^2+b_4^2 \rho ^2-2 b_4 \rho  (T-\beta  \varepsilon )+2 b_3 \rho  \left(\beta  \varepsilon +b_4 \rho -T\right)+T^2-2 \beta  \varepsilon  T\right) \\+\rho ^2 (b_3^2 \lambda _3+b_4^2 \lambda _4))=\lambda _1 (2 \beta ^2+4 b_3^2 \rho ^2+4 b_4^2 \rho ^2+4 b_3 \rho  (\beta  \varepsilon +b_4 \rho -T)-4 \beta  \varepsilon  (T-b_4 \rho) \nonumber \\-4 b_4 \rho  T-\beta  \varepsilon  \rho ^2 \epsilon _1-3 \rho ^4+2 T^2+\rho ^2 \epsilon _1 (-\lambda _2^2-\lambda _3^2-\lambda _4^2+\lambda _1 (\lambda _2+\lambda _3+\lambda _4)-3 \rho _1+T)).\nonumber
\end{eqnarray}

Eliminating $\lambda_2$ from (\ref{l23}) using (\ref{e:b11}), we find
\begin{eqnarray}\label{l24}
3 \lambda _1 \rho ^2 \epsilon _1 (4 \lambda _1^2+2 \left(\lambda _3+\lambda _4\right) \lambda _1+\rho _1)=2 (\lambda _3 (\beta ^2+b_4^2 \rho ^2-2 b_4 \rho  (T-\beta  \varepsilon )+2 b_3 \rho  (\beta  \varepsilon \\+b_4 \rho -T) +T^2-2 \beta  \varepsilon  T)+\lambda _4 (\beta ^2+b_3^2 \rho ^2-2 b_4 \rho  (T-\beta  \varepsilon )+2 b_3 \rho  (\beta  \varepsilon +b_4 \rho -T)+T^2 \nonumber \\-2 \beta  \varepsilon  T))+\lambda _1 (8 \beta ^2+10 b_3^2 \rho ^2+10 b_4^2 \rho ^2-16 b_4 \rho  (T-\beta  \varepsilon )+16 b_3 \rho  (\beta  \varepsilon +b_4 \rho -T)\nonumber \\-\beta  \varepsilon  \rho ^2 \epsilon _1-3 \rho ^4+8 T^2-16 \beta  \varepsilon  T+\rho ^2 \epsilon _1 (-2 \lambda _3^2-2 \lambda _4 \lambda _3-2 \lambda _4^2+T)).
\nonumber
\end{eqnarray}

Eliminating $b_3$ from (\ref{l24}) using (\ref{l17}), we find
\begin{eqnarray}\label{l25}
c_4+c_5 b_4+c_6b_4^2=0,
\end{eqnarray}
where
\begin{eqnarray}
c_4&=&d_{17} \lambda _3^6+d_{16} \lambda _3^5+d_{15} \lambda _3^4+d_{14} \lambda _3^3+d_{13} \lambda _3^2+d_{12} \lambda _3+d_{11},\nonumber\\
d_{11}&=&a_{25} \lambda _4^4+a_{24} \lambda _4^3+a_{23} \lambda _4^2+a_{22} \lambda _4+a_{21}-8 \lambda _1 \lambda _4^6-120 \lambda _1^2 \lambda _4^5,\nonumber\\
a_{21}&=&-90 \lambda _1^3 \rho ^4-4000 \lambda _1^7-40 \lambda _1^3 T^2+800 \varepsilon  \lambda _1^5 T+114 \lambda _1^3 \rho ^2 T \epsilon _1+6 (18-145 \varepsilon ) \lambda _1^5 \rho ^2 \epsilon _1-27 \lambda  \lambda _1^3 \rho ^2 \epsilon _1,\nonumber\\
a_{22}&=&-18 \lambda _1^2 \rho ^4-6600 \lambda _1^6-18 \lambda _1^2 T^2+840 \varepsilon  \lambda _1^4 T+36 \lambda _1^2 \rho ^2 T \epsilon _1-18 (39 \varepsilon -7) \lambda _1^4 \rho ^2 \epsilon _1-18 \lambda  \lambda _1^2 \rho ^2 \epsilon _1,\nonumber\\
a_{23}&=&5400 \lambda _1^5-2 \lambda _1 T^2+416 \varepsilon  \lambda _1^3 T+2 \lambda _1 \rho ^2 T \epsilon _1+2 (33-136 \varepsilon ) \lambda _1^3 \rho ^2 \epsilon _1-3 \lambda  \lambda _1 \rho ^2 \epsilon _1,\nonumber\\
a_{24}&=&-2568 \lambda _1^4+96 \varepsilon  \lambda _1^2 T-6 (5 \varepsilon -3) \lambda _1^2 \rho ^2 \epsilon _1,\nonumber\\
a_{25}&=&-744 \lambda _1^3+8 \varepsilon  \lambda _1 T+2 (\varepsilon +1) \lambda _1 \rho ^2 \epsilon _1,-120 \lambda _1^2,-8 \lambda _1,\nonumber \\
d_{12}&=&a_{30} \lambda _4^4+a_{29} \lambda _4^3+a_{28} \lambda _4^2+a_{27} \lambda _4+a_{26}+8 \lambda _4^6+96 \lambda _1 \lambda _4^5,\nonumber\\
a_{26}&=&-1800 \lambda _1^6+30 \lambda _1^2 T^2-120 \varepsilon  \lambda _1^4 T-48 \lambda _1^2 \rho ^2 T \epsilon _1+6 (53 \varepsilon +33) \lambda _1^4 \rho ^2 \epsilon _1-36 \lambda  \lambda _1^2 \rho ^2 \epsilon _1,\nonumber\\
a_{27}&=&-1440 \lambda _1^5+16 \lambda _1 T^2-304 \varepsilon  \lambda _1^3 T-16 \lambda _1 \rho ^2 T \epsilon _1+2 (251 \varepsilon +87) \lambda _1^3 \rho ^2 \epsilon _1-12 \lambda  \lambda _1 \rho ^2 \epsilon _1,\nonumber\\
a_{28}&=&224 \lambda _1^4+2 T^2-256 \varepsilon  \lambda _1^2 T+6 (51 \varepsilon +11) \lambda _1^2 \rho ^2 \epsilon _1,\nonumber\\
a_{29}&=&768 \lambda _1^3-80 \varepsilon  \lambda _1 T+2 (29 \varepsilon +5) \lambda _1 \rho ^2 \epsilon _1,
a_{30}=416 \lambda _1^2-8 \varepsilon  T,\nonumber\\
d_{13}&=&a_{34} \lambda _4^3+a_{33} \lambda _4^2+a_{32} \lambda _4+a_{31}+24 \lambda _4^5+288 \lambda _1 \lambda _4^4,\nonumber\\
a_{31}&=&1560 \lambda _1^5+10 \lambda _1 T^2-400 \varepsilon  \lambda _1^3 T-16 \lambda _1 \rho ^2 T \epsilon _1+2 (305 \varepsilon +69) \lambda _1^3 \rho ^2 \epsilon _1-12 \lambda  \lambda _1 \rho ^2 \epsilon _1, \nonumber\\
a_{32}&=&3392 \lambda _1^4+2 T^2-400 \varepsilon  \lambda _1^2 T+12 (39 \varepsilon +7) \lambda _1^2 \rho ^2 \epsilon _1, \nonumber\\ a_{33}&=&2976 \lambda _1^3-144 \varepsilon  \lambda _1 T+2 (57 \varepsilon +9) \lambda _1 \rho ^2 \epsilon _1,\nonumber\\
a_{34}&=&1312 \lambda _1^2-16 \varepsilon  T,\nonumber\\
d_{14}&=&a_{37} \lambda _4^2+a_{36} \lambda _4+a_{35}+40 \lambda _4^4+448 \lambda _1 \lambda _4^3,\nonumber\\
a_{35}&=&2520 \lambda _1^4-240 \varepsilon  \lambda _1^2 T+48 (7 \varepsilon +1) \lambda _1^2 \rho ^2 \epsilon _1, a_{36}=3264 \lambda _1^3-128 \varepsilon  \lambda _1 T+16 (7 \varepsilon +1) \lambda _1 \rho ^2 \epsilon _1,\nonumber
\end{eqnarray}
\begin{eqnarray}
a_{37}&=&1792 \lambda _1^2-16 \varepsilon  T,\nonumber
d_{15}=a_{39} \lambda _4+a_{38}+40 \lambda _4^3+384 \lambda _1 \lambda _4^2, \nonumber\\
a_{38}&=&1320 \lambda _1^3-40 \varepsilon  \lambda _1 T+8 (7 \varepsilon +1) \lambda _1 \rho ^2 \epsilon _1,\nonumber\\
a_{39}&=&1184 \lambda _1^2-8 \varepsilon  T,d_{16}=360 \lambda _1^2+192 \lambda _4 \lambda _1+24 \lambda _4^2, \nonumber d_{17}=40 \lambda _1+8 \lambda _4,\nonumber\\c_5&=&d_{22} \lambda _3^4+d_{21} \lambda _3^3+d_{20} \lambda _3^2+d_{19} \lambda _3+d_{18},\nonumber\\
d_{18}&=&a_{42} \lambda _4^2+a_{41} \lambda _4+a_{40}-24 \varepsilon  \lambda _1 \lambda _4^4 \rho -192 \varepsilon  \lambda _1^2 \lambda _4^3 \rho ,\nonumber \\a_{40}&=&-480 \varepsilon  \lambda _1^5 \rho +48 \lambda _1^3 \rho  T-36 \lambda _1^3 \rho ^3 \epsilon _1,\nonumber
a_{41}=-888 \varepsilon  \lambda _1^4 \rho +60 \lambda _1^2 \rho  T-72 \lambda _1^2 \rho ^3 \epsilon _1,\nonumber\\a_{42}&=&-576 \varepsilon  \lambda _1^3 \rho +12 \lambda _1 \rho  T-12 \lambda _1 \rho ^3 \epsilon _1,\nonumber\\d_{19}&=&a_{45} \lambda _4^2+a_{44} \lambda _4+a_{43}+24 \varepsilon  \lambda _4^4 \rho +144 \varepsilon  \lambda _1 \lambda _4^3 \rho ,\nonumber\\
a_{43}&=&72 \varepsilon  \lambda _1^4 \rho -36 \lambda _1^2 \rho  T+72 \lambda _1^2 \rho ^3 \epsilon _1,\nonumber
a_{44}=240 \varepsilon  \lambda _1^3 \rho -48 \lambda _1 \rho  T+24 \lambda _1 \rho ^3 \epsilon _1,\nonumber\\
a_{45}&=&288 \varepsilon  \lambda _1^2 \rho -12 \rho  T,\nonumber
d_{20}=a_{47} \lambda _4+a_{46}+48 \varepsilon  \lambda _4^3 \rho +240 \varepsilon  \lambda _1 \lambda _4^2 \rho ,\nonumber\\
a_{46}&=&240 \varepsilon  \lambda _1^3 \rho -12 \lambda _1 \rho  T+24 \lambda _1 \rho ^3 \epsilon _1,a_{47}=432 \varepsilon  \lambda _1^2 \rho -12 \rho  T,\nonumber\\
d_{21}&=&144 \varepsilon  \lambda _1^2 \rho +192 \varepsilon  \lambda _4 \lambda _1 \rho +48 \varepsilon  \lambda _4^2 \rho ,\nonumber
d_{22}=24 \varepsilon  \lambda _1 \rho +24 \varepsilon  \lambda _4 \rho,\nonumber
c_6=d_{25} \lambda _3^2+d_{24} \lambda _3+d_{23}, \nonumber\\
d_{23}&=&-36 \lambda _1^3 \rho ^2-18 \lambda _4 \lambda _1^2 \rho ^2-6 \lambda _4^2 \lambda _1 \rho ^2,  d_{24}=-18 \lambda _1^2 \rho ^2+18 \lambda _4^2 \rho ^2+48 \lambda _1 \lambda _4 \rho ^2, \nonumber\\d_{25}&=&18 \lambda _4 \rho ^2-6 \lambda _1 \rho ^2\nonumber
\end{eqnarray}
 
 Now, we denote by $B_1$, $B_2$ and $B_3$ the functions
 \begin{eqnarray} \label{l26}
 B_1= c_2 c_4-c_1 c_5, \quad B_2=c_3 c_4-c_1 c_6, \quad B_3= c_3 c_5-c_2 c_6.
 \end{eqnarray}
 
 Applying Cramer's rule to the system of equations (\ref{l21}) and (\ref{l25}), we find that 
\begin{eqnarray}\label{l27}
\frac{b_4^2}{B_1}=\frac{-b_4}{B_2}=\frac{1}{B_3},
\end{eqnarray}
which gives
\begin{eqnarray}\label{l28}
b_4^2=\frac{B_1}{B_3},\quad b_4=\frac{-B_2}{B_3}.
\end{eqnarray}
Eliminating $b_4$, we get
\begin{eqnarray}\label{l29}
F_1(\lambda_1,\lambda_3,\lambda_4)=B_2^2-B_1 B_3=0,
\end{eqnarray}
which is a polynomial equation in terms of $\lambda_3$ of degree 16 with coefficients functions of $\lambda_1$ and $\lambda_4$.

Differentiating (\ref{l21}) with respect $e_1$ and using (\ref{e:b14}), we find
\begin{eqnarray}\label{l30}
b_4 \left(2 c_3 \lambda _1 \lambda _4 \epsilon _1+C_2\right)+b_4^2 \left(c_2+C_3\right)+2 b_4^3 c_3+c_2 \lambda _1 \lambda _4 \epsilon _1+C_1=0,
\end{eqnarray}
where $C_1=e_1(c_1), C_2=e_1(c_2), C_3=e_1(c_3)$ are functions of $b_3, b_4,\lambda_1, \lambda_3$ and $\lambda_4$.

Eliminating $b_3$ from (\ref{l30}), using (\ref{l17}), we get
\begin{eqnarray}\label{l31}
b_4^3 D_3+b_4^2 D_2+b_4 D_1+D_0=0,
\end{eqnarray}
where $D_0, D_1, D_2$ and $D_3$ are functions of $\lambda_1, \lambda_3$ and $\lambda_4$.

Eliminating $b_4$ from (\ref{l31}) using (\ref{l21}), we obtain
\begin{multline}\label{l32}
F_2(\lambda_1,\lambda_3,\lambda_4)=c_1 (c_3^2 (D_1^2-2 D_0 D_2)+c_3 (c_2 (3 D_0 D_3-D_1 D_2)+c_1 (D_2^2-2 D_1 D_3))\\+D_3 (c_1^2 D_3-c_2 c_1 D_2+c_2^2 D_1))-D_0 (c_2^3 D_3-c_3 c_2^2 D_2+c_3^2 c_2 D_1-c_3^3 D_0)=\sum_{i=0}^{24}q_i\lambda_3^i=0,
\end{multline}
which is a polynomial equation in  $\lambda_3$ of degree $24$, with  coefficients $q_i's $ as functions of $\lambda_1$ and $\lambda_4$.
For example,  
\begin{eqnarray}\label{l39}
q_{24}=-294912 \rho^6, q_{23}=18432 \left(13 d_5 \rho ^6+4 d_{10} \rho ^4+96 \left(3 \lambda _1+\lambda _4\right) \rho ^6\right),\dots .
\end{eqnarray}

Eliminating $\lambda_3$ from (\ref{l32}) using (\ref{l29}), we obtain
\begin{eqnarray}\label{l33}
F_3(\lambda_1,\lambda_4)=\sum_{k=0}^{m}l_k\lambda_4^k=0,
\end{eqnarray}
which is a polynomial equation  $\lambda_4$ of some degree $m$, with  coefficients $l_k$ as functions of
$\rho, \lambda_1, T, T_1, T_2$,  where $T, \rho$ are defined in Lemma \ref{L2} and are functions of $\lambda_1$, and $T_1=e_1(T), T_2=e_1e_1(T)$, which are also functions of $\lambda_1$. Hence $l_k$'s are
functions of $\lambda_1$.

Differentiating (\ref{l33}) with respect $e_2$ and using (\ref{e:b3}), we get
\begin{eqnarray}\label{l34}
\frac{\partial F_3}{\partial \lambda_4} \ e_2(\lambda_4)=0.
\end{eqnarray}

\noindent
\underline{Claim 1:} It is $e_2(\lambda_4)=0$. 

\smallskip
\noindent
Indeed, if $e_2(\lambda_4)\neq 0$,
then
\begin{equation}\label{l35}
\frac{\partial F_3}{\partial \lambda_4}=0. \end{equation}

 Taking successive differentiation of  (\ref{l35}) with respect to
$e_2$ ,  gives
\begin{equation}\label{l36}
\frac{\partial^m F_3}{\partial \lambda_4^m}=0. \end{equation}

 From (\ref{l36}), we obtain 
\begin{equation}\label{l37}
l_m=0, 
\end{equation}
which is  the coefficient of $\lambda_4^m$ in (\ref{l33}).

 Then, using (\ref{l37}), $F_3$ reduces to a
polynomial in degree $m-1$ and its successive differentiation with
respect to $e_2$ gives $l_{m-1}=0$. 
Then $F_3$ reduces to a
polynomial in degree $m-2$. Its successive differentiation with
respect to $e_2$ gives $l_{m-2}=0$, $l_{m-3}=0,\dots, l_0=0.$ Eliminating  $T, T_1, T_2$ and $\rho$ from $l_k=0, k=0,1\dots,m, $ we get a polynomial equation $\mathcal{F}(\lambda_1)=0$  of $\lambda_1$, with real coefficients. Without actually  solving
$\mathcal{F}(\lambda_1)=0$,  even if any real solution exists, we get that $\lambda_1$ a real
constant. This contradicts $e_1(H)\ne 0$ in (\ref{e:b3}), and this proves the claim. 

Similarly, differentiating (\ref{l33}) with respect $e_3$ and $e_4$ and using (\ref{e:b3}), we get
\begin{eqnarray}\label{l38}
\frac{\partial F_3}{\partial \lambda_4} \ e_3(\lambda_4)=0, \,\ \frac{\partial F_3}{\partial \lambda_4} \ e_4(\lambda_4)=0,
\end{eqnarray}
which give
\begin{equation}\label{l44}
 e_3(\lambda_4)=0,\,\ e_4(\lambda_4)=0.
\end{equation}

Now, differentiating (\ref{l32}) with respect to $e_2$, $e_3$ and $e_4$ and using the fact that $e_2(\lambda_4)=0$ $e_3(\lambda_4)=0$ and $e_4(\lambda_4)=0$, we find that
\begin{eqnarray}\label{l40}
\frac{\partial F_2}{\partial \lambda_3} \ e_2(\lambda_3)=0,\,\ \frac{\partial F_2}{\partial \lambda_3} \ e_3(\lambda_3)=0,\,\ \frac{\partial F_2}{\partial \lambda_3} \ e_4(\lambda_3)=0,
\end{eqnarray}
respectively.

\medskip
\noindent
\underline{Claim 2:} It is $\frac{\partial F_2}{\partial \lambda_3} \neq 0$.

\smallskip
\noindent
 Indeed, if
\begin{equation}\label{l41}
\frac{\partial F_2}{\partial \lambda_3}=0, \end{equation}
then its successive differentiation gives
\begin{equation}\label{l42}
\frac{\partial^{24} F_2}{\partial \lambda_3^{24}}=0, \end{equation}
which implies that $q_{24}=0$. Then, from (\ref{l39}) we find $\rho=0$, a contradiction to the first expression of (\ref{e:b3}), and this proves the claim.

 Therefore, we obtain
\begin{equation}\label{l42}
e_2(\lambda_3)=0, \,\ e_3(\lambda_3)=0, \,\ e_4(\lambda_3)=0.
\end{equation}

Differentiating (\ref{e:b11}), with respect to $e_3$ and using the fact that $e_3(\lambda_3)=0, e_3(\lambda_4)=0, e_4(\lambda_3)=0, e_4(\lambda_4)=0$ and (\ref{e:b3}), we get
\begin{equation}\label{l43}
 e_3(\lambda_2)=0, \,\ e_4(\lambda_2)=0.
\end{equation}

By combining $e_2(\lambda_3)=0, e_2(\lambda_4)=0, e_3(\lambda_2)=0, e_3(\lambda_4)=0, e_4(\lambda_2)=0, e_4(\lambda_3)=0$ with equation (\ref{e:b6}), we complete the proof of the lemma.
\end{proof}

Next, we have:
\begin{lem}\label{ww}
 Let $M^{4}_{r}$ be a hypersurface satisfying $(\ref{e:a2})$ of non-constant mean curvature with all distinct principal curvatures in
 $E^{5}_{s}$, having the shape operator given by $(\ref{e:b1})$ with respect to suitable orthonormal frame  $\{e_{1},
e_{2}, e_{3}, e_{4}\}$.  Set 
$$b_{1}=(\lambda_{3}-\lambda_{4})b_2+(\lambda_{4}-\lambda_{2})b_3+(\lambda_{2}-\lambda_{3})b_4.
$$
 Then:\\
$(\textbf{a})$ If $b_1\neq 0$, it is
\begin{equation}\label{e:d43}\omega_{23}^{4}=\omega_{32}^{4}=\omega_{42}^{3}=\omega_{24}^{3}=\omega_{34}^{2}=\omega_{43}^{2}=0,
\end{equation} 

$(\textbf{b})$ If $b_1=0$, we have that
\begin{equation}\label{e:d44}b_i=\alpha\lambda_i+\phi,\quad e_1(\alpha)=\alpha\phi+\lambda_{1}(\epsilon_1+\alpha^2),\quad
e_1(\phi)=\phi^2+\alpha\lambda_1\phi,\end{equation} for some
smooth functions $\alpha$ and $\phi$, ($i=2,3,4$).
\end{lem}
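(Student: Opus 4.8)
The plan is to determine the remaining off-diagonal connection coefficients from the Gauss and Codazzi equations, feeding on the simplifications already obtained. First I would observe that, by Lemma~\ref{L1} and Lemma~\ref{w} together with the compatibility relations $(\ref{e:b5})$, the only Christoffel symbols $\omega_{ij}^{k}$ with all lower indices among $\{2,3,4\}$ that are not already known to vanish are the diagonal ones $\omega_{ii}^{1}=\epsilon_{i}b_{i}$ and the three quantities $\omega_{23}^{4},\omega_{32}^{4},\omega_{42}^{3}$, since $(\ref{e:b5})$ forces $\omega_{24}^{3}=-\omega_{23}^{4}$, $\omega_{34}^{2}=-\omega_{32}^{4}$, $\omega_{43}^{2}=-\omega_{42}^{3}$. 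Applying the Codazzi equation $(\ref{e:b7})$ with $(i,k,j)=(2,3,4)$ and $(2,4,3)$ gives $(\lambda_{2}-\lambda_{4})\omega_{32}^{4}=(\lambda_{3}-\lambda_{4})\omega_{23}^{4}$ and $(\lambda_{2}-\lambda_{3})\omega_{42}^{3}=(\lambda_{3}-\lambda_{4})\omega_{23}^{4}$; since all principal curvatures are distinct, $\omega_{32}^{4}$ and $\omega_{42}^{3}$ are nonzero multiples of $\omega_{23}^{4}$, so the whole list $(\ref{e:d43})$ vanishes as soon as $\omega_{23}^{4}=0$.

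For $(\textbf{a})$ I would then compute $R(e_{2},e_{3})e_{4}$ in two ways. By the Gauss equation $(\ref{e:a7})$ and the diagonal form $(\ref{e:b1})$, $R(e_{2},e_{3})e_{4}=0$ because the three indices are distinct. On the other hand, expanding $R(e_{2},e_{3})e_{4}=\nabla_{e_{2}}\nabla_{e_{3}}e_{4}-\nabla_{e_{3}}\nabla_{e_{2}}e_{4}-\nabla_{[e_{2},e_{3}]}e_{4}$ with the connection reduced by Lemma~\ref{L1} and Lemma~\ref{w}, the $e_{1}$-component yields $\omega_{23}^{4}(b_{3}-b_{4})+\omega_{32}^{4}(b_{4}-b_{2})=0$. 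Inserting $\omega_{32}^{4}=\tfrac{\lambda_{3}-\lambda_{4}}{\lambda_{2}-\lambda_{4}}\,\omega_{23}^{4}$ and multiplying by $\lambda_{2}-\lambda_{4}$, the bracket collapses to exactly $-b_{1}$, so $\omega_{23}^{4}\,b_{1}=0$. Hence $b_{1}\neq 0$ forces $\omega_{23}^{4}=0$, and with it all six coefficients in $(\ref{e:d43})$. (The $e_{2}$- and $e_{3}$-components of the same identity give $e_{2}(\omega_{32}^{4})=0$ and $e_{3}(\omega_{23}^{4})=0$, which are not needed here.)

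For $(\textbf{b})$, when $b_{1}=0$, I would read the definition of $b_{1}$ as the vanishing of the determinant of the matrix with rows $(1,1,1)$, $(\lambda_{2},\lambda_{3},\lambda_{4})$, $(b_{2},b_{3},b_{4})$. Because the principal curvatures are distinct, the first two rows are linearly independent, so the third is a linear combination of them; that is, $b_{i}=\alpha\lambda_{i}+\phi$ for $i=2,3,4$, with $\alpha=(b_{2}-b_{3})/(\lambda_{2}-\lambda_{3})$ and $\phi=b_{2}-\alpha\lambda_{2}$, which are smooth on $U$ since the $b_{i}$ and $\lambda_{i}$ are smooth and $\lambda_{2}\neq\lambda_{3}$. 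Then I would differentiate $b_{i}=\alpha\lambda_{i}+\phi$ along $e_{1}$, using $(\ref{e:b14})$ rewritten as $e_{1}(b_{i})=b_{i}^{2}+\epsilon_{1}\lambda_{1}\lambda_{i}$ (via $\omega_{ii}^{1}=\epsilon_{i}b_{i}$) and $(\ref{e:b6})$ with $i=1$ rewritten as $e_{1}(\lambda_{i})=(\lambda_{i}-\lambda_{1})b_{i}$ (via $\omega_{j1}^{j}=-\omega_{jj}^{1}$, itself a case of $(\ref{e:b5})$). After substituting $b_{i}=\alpha\lambda_{i}+\phi$ everywhere, the identity becomes affine in $\lambda_{i}$, and since it holds at the three distinct values $\lambda_{2},\lambda_{3},\lambda_{4}$ both of its coefficients must vanish; these coefficients are precisely $e_{1}(\alpha)-\alpha\phi-\lambda_{1}(\epsilon_{1}+\alpha^{2})$ and $e_{1}(\phi)-\phi^{2}-\alpha\lambda_{1}\phi$, which gives $(\ref{e:d44})$. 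Since $b_{1}\neq 0$ and $b_{1}=0$ are exhaustive, this proves the lemma.

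The step I expect to be the main obstacle is purely a matter of careful bookkeeping: carrying the covariant-derivative expansion of $R(e_{2},e_{3})e_{4}$ through the reduced connection and confirming that, after the Codazzi substitution, its $e_{1}$-component is exactly a nonzero multiple of $\omega_{23}^{4}\,b_{1}$; and, in $(\textbf{b})$, confirming that the two coefficients of the affine-in-$\lambda_{i}$ identity are precisely the expressions appearing in $(\ref{e:d44})$. There is no conceptual difficulty beyond these verifications.
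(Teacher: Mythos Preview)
Your proposal is correct. The main difference from the paper lies in part~(a): the paper obtains the key relation $\omega_{32}^{4}(b_{2}-b_{4})=\omega_{23}^{4}(b_{3}-b_{4})$ indirectly, by first evaluating $g(R(e_{1},e_{3})e_{2},e_{4})$ and $g(R(e_{1},e_{2})e_{3},e_{4})$ to produce the evolution equations $e_{1}(\omega_{32}^{4})=b_{3}\omega_{32}^{4}$ and $e_{1}(\omega_{23}^{4})=b_{2}\omega_{23}^{4}$, and then differentiating the Codazzi relation $(\lambda_{2}-\lambda_{4})\omega_{32}^{4}=(\lambda_{3}-\lambda_{4})\omega_{23}^{4}$ along $e_{1}$. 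You instead read off the same relation in a single step from the $e_{1}$-component of $R(e_{2},e_{3})e_{4}=0$, which is more direct and avoids the auxiliary ODEs entirely. Your part~(b) is essentially identical to the paper's (the determinant interpretation of $b_{1}=0$ is just a repackaging of the equal-ratios argument). One small remark on your part~(b): the identity obtained after substituting $b_{i}=\alpha\lambda_{i}+\phi$ is a priori quadratic in $\lambda_{i}$, and becomes affine only because the $\alpha^{2}\lambda_{i}^{2}$ terms on both sides cancel; this is routine but should be stated when you carry out the verification you flag at the end.
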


\begin{proof}  \textbf{(a)} Evaluating $g(R(e_{1},e_{3})e_{2},e_{4})$ and
$g(R(e_{1},e_{2})e_{3},e_{4})$ using (\ref{e:a7}), (\ref{e:b1}),
Lemma \ref{L1} and Lemma \ref{w}, we find
\begin{equation}\label{e:ba9}
e_{1}(\omega_{32}^{4})-\omega_{32}^{4}b_3=0, e_{1}(\omega_{23}^{4})-\omega_{23}^{4}b_2=0,
\end{equation}
respectively.

Putting $j=4, k=2, i=3$ in (\ref{e:b7}), we get
 \begin{equation}\label{e:ba11}
(\lambda_{2}-\lambda_{4})\omega_{32}^{4}=(\lambda_{3}-\lambda_{4})\omega_{23}^{4}.
\end{equation}

Differentiating (\ref{e:ba11}) with respect to $e_{1}$, and using
(\ref{e:ba9}), we find
\begin{equation}\label{e:ba12}
(e_{1}(\lambda_{2})-e_{1}(\lambda_{4})) \omega_{32}^{4}+
(\lambda_{2}-\lambda_{4}) \omega_{32}^{4}\ b_3=(e_{1}(\lambda_{3})-e_{1}(\lambda_{4})) \omega_{23}^{4}+
(\lambda_{3}-\lambda_{4}) \omega_{23}^{4}\ b_2.
\end{equation}

Using (\ref{e:b6}) in (\ref{e:ba12}), we obtain
\begin{equation}\label{e:ba13}
\omega_{32}^{4}(b_2-b_4)=\omega_{23}^{4}(b_3-b_4).
\end{equation}

Eliminating $\omega_{23}^{4}$ using (\ref{e:ba11}) and
(\ref{e:ba13}), we find
\begin{equation}\label{e:ba14}
\omega_{32}^{4}\ b_1=0.
\end{equation}

From (\ref{e:ba14}), we obtain  $\omega_{32}^{4}=0$ as $b_1\neq 0$. Using $\omega_{32}^{4}=0$ in
(\ref{e:ba11}), gives $\omega_{23}^{4}=0.$ From (\ref{e:b5}), we get
$\omega_{34}^{2}=-\omega_{32}^{4}$ and
$\omega_{23}^{4}=-\omega_{24}^{3}$, therefore, we obtain
$\omega_{34}^{2}=0$ and $\omega_{24}^{3}=0$, which by use of
(\ref{e:b7}) gives $\omega_{43}^{2}=0$ and $\omega_{42}^{3}=0$.

\smallskip
\textbf{(b)} Since  $b_1=0$, it follows that 
\begin{equation}\label{e:d47}
\frac{b_2-b_3}{\lambda_{2}-\lambda_{3}}=\frac{b_4-b_3}{\lambda_{4}-\lambda_{3}}
=\frac{b_2-b_4}{\lambda_{2}-\lambda_{4}}=\alpha,
\end{equation}
for some smooth function $\alpha$.

From (\ref{e:d47}), we get
\begin{equation}\label{e:d48}
b_i=\alpha\lambda_{i}+\phi, \ i=2,3,4,
\end{equation}
for some smooth function $\phi$.

Differentiating (\ref{e:d48}) with respect to $e_1$ and using
(\ref{e:b6}), (\ref{e:b14}) and (\ref{e:d48}), we find
\begin{equation}\label{e:d49}
e_1(\alpha)=\alpha\phi+\lambda_{1}(\epsilon_1+\alpha^2),\quad
e_1(\phi)=\phi^2+\alpha\lambda_1\phi,
\end{equation}
whereby completing  the proof of the lemma.\end{proof}

\section{\textbf{Proof of  Theorem $1.1$}}
Evaluating
$$g(R(e_{2},e_{3})e_{2},e_{3}), \quad g(R(e_{2},e_{4})e_{2},e_{4}),
\mbox{and}\quad g(R(e_{3},e_{4})e_{3},e_{4}),$$ using (\ref{e:a7}), Lemma \ref{L1} and Lemma \ref{w}, we find that
\begin{equation}\label{la1}
-\epsilon_1 b_2 b_3
+(\omega_{32}^{4}\omega_{23}^{4}-\omega_{34}^{2}\omega_{43}^{2}-\omega_{42}^{3}\omega_{24}^{3})\epsilon_2
\epsilon_3\epsilon_4= \lambda_{2} \lambda_{3},
\end{equation}
\begin{equation}\label{la2}
-\epsilon_1b_2 b_4
+(\omega_{42}^{3}\omega_{24}^{3}-\omega_{34}^{2}\omega_{43}^{2}-\omega_{32}^{4}\omega_{23}^{4})\epsilon_2
\epsilon_3\epsilon_4= \lambda_{2} \lambda_{4},
\end{equation}
\begin{equation}\label{la3}
-\epsilon_1 b_3 b_4
+(\omega_{34}^{2}\omega_{43}^{2}-\omega_{24}^{3}\omega_{42}^{3}-\omega_{23}^{4}\omega_{32}^{4})\epsilon_2
\epsilon_3\epsilon_4= \lambda_{3} \lambda_{4}.
\end{equation}
As in  Lemma \ref{ww} we set $b_{1}=(\lambda_{3}-\lambda_{4})b_2+(\lambda_{4}-\lambda_{2})b_3+(\lambda_{2}-\lambda_{3})b_4$, and consider two cases.

\smallskip
\noindent
\textbf{Case I:} $b_1\ne 0$, so (\ref{e:d43}) holds. Then, from (\ref{la1}), (\ref{la2}), (\ref{la3}) we find
\begin{equation}\label{e:ba17}
\epsilon_1 b_2 b_3=-
\lambda_{2} \lambda_{3}, \quad \epsilon_1 b_2 b_4=- \lambda_{2} \lambda_{4}, \quad
\epsilon_1 b_3 b_4=-
\lambda_{3} \lambda_{4},
\end{equation}
respectively, and (\ref{e:b17}) can be written as

\begin{equation}\label{a1}
e_1e_1(\lambda_1)=\left(b_2+b_3+b_4\right) e_1(\lambda_1)+\lambda _1 \epsilon _1 \left(\varepsilon  \left(\lambda _1^2+\lambda _2^2+\lambda _3^2+\lambda _4^2\right)-\lambda \right).
\end{equation}

Differentiating (\ref{e:b9}) with respect to $e_1$, we find
\begin{equation}\label{a3}
b_2 \left(\lambda _2-\lambda _1\right)+b_3 \left(\lambda _3-\lambda _1\right)+b_4 \left(\lambda _4-\lambda _1\right)+3 e_1(\lambda_1)=0.
\end{equation}

Differentiating (\ref{a3}) with respect to $e_1$ and using (\ref{e:b6}),  (\ref{e:b14}), we find
\begin{eqnarray}\label{a4}
2 b_4^2 \lambda _4+3 e_1e_1(\lambda_1)+\lambda _1 \left(\lambda _2^2+\lambda _3^2+\lambda _4^2\right) \epsilon _1=2 b_2^2 \left(\lambda _1-\lambda _2\right)+2 b_4^2 \lambda _1\\+2 b_3^2 \left(\lambda _1-\lambda _3\right)+(b_2 +b_3 +b_4 )e_1(\lambda_1)+\lambda _1^2 \lambda _2 \epsilon _1+\lambda _1^2 \lambda _3 \epsilon _1+\lambda _1^2 \lambda _4 \epsilon _1.\nonumber
\end{eqnarray}

Eliminating $e_1e_1(\lambda_1)$ using (\ref{a1}) from  (\ref{a4}), we get
\begin{eqnarray}\label{a5}
-2 b_2^2 \left(\lambda _1-\lambda _2\right)-2 b_3^2 \left(\lambda _1-\lambda _3\right)+2 b_4^2 \lambda _4+2 \left(b_2+b_3+b_4\right) e_1(\lambda_1)\nonumber\\-\lambda _1 (2 b_4^2+(3 \lambda +\lambda _1 (\lambda _2  +\lambda _3+\lambda _4)) \epsilon _1)+3 \varepsilon  \lambda _1^3 \epsilon _1+3 \varepsilon  \lambda _2^2 \lambda _1 \epsilon _1+3 \varepsilon  \lambda _3^2 \lambda _1 \epsilon _1\\+3 \varepsilon  \lambda _4^2 \lambda _1 \epsilon _1+\lambda _2^2 \lambda _1 \epsilon _1+\lambda _3^2 \lambda _1 \epsilon _1+\lambda _4^2 \lambda _1 \epsilon _1=0.\nonumber
\end{eqnarray}

Eliminating $e_1(\lambda_1)$ using (\ref{a3}) from  (\ref{a5}), we obtain
\begin{eqnarray}\label{a6}
b_2 \left(b_3 \left(4 \lambda _1-2 \left(\lambda _2+\lambda _3\right)\right)+2 b_4 \left(2 \lambda _1-\lambda _2-\lambda _4\right)\right)+2 b_3 b_4 \left(2 \lambda _1-\lambda _3-\lambda _4\right)+4 b_4^2 \lambda _4\nonumber\\+9 \varepsilon  \lambda _1^3 \epsilon _1+9 \varepsilon  \lambda _2^2 \lambda _1 \epsilon _1+9 \varepsilon  \lambda _3^2 \lambda _1 \epsilon _1+9 \varepsilon  \lambda _4^2 \lambda _1 \epsilon _1+3 \lambda _2^2 \lambda _1 \epsilon _1+3 \lambda _3^2 \lambda _1 \epsilon _1+3 \lambda _4^2 \lambda _1 \epsilon _1\\=4 b_2^2 \left(\lambda _1-\lambda _2\right)+4 b_3^2 \left(\lambda _1-\lambda _3\right)+\lambda _1 \left(4 b_4^2+3 \left(3 \lambda +\lambda _1 \left(\lambda _2+\lambda _3+\lambda _4\right)\right) \epsilon _1\right).\nonumber
\end{eqnarray}

Eliminating $b_2, b_3, b_4$ and $\lambda_1$ using (\ref{e:b9}) and (\ref{e:ba17}) from  (\ref{a6}), we obtain
\begin{equation}\label{a7}
\lambda _2 \lambda _3 \lambda _4 \epsilon _1 F_4(\lambda_2, \lambda_3, \lambda_4)=0.
\end{equation}
where
\begin{eqnarray}
F_4(\lambda_2, \lambda_3, \lambda_4)&=& \varepsilon  \lambda _2^3+3 \varepsilon  \lambda _3 \lambda _2^2+3 \varepsilon  \lambda _4 \lambda _2^2+12 \varepsilon  \lambda _3^2 \lambda _2+12 \varepsilon  \lambda _4^2 \lambda _2+6 \varepsilon  \lambda _3 \lambda _4 \lambda _2+10 \varepsilon  \lambda _3^3\nonumber\\&+&10 \varepsilon  \lambda _4^3+12 \varepsilon  \lambda _3 \lambda _4^2+12 \varepsilon  \lambda _3^2 \lambda _4+4 \lambda _2^3+6 \lambda _3 \lambda _2^2+6 \lambda _4 \lambda _2^2+6 \lambda _3^2 \lambda _2+6 \lambda _4^2 \lambda _2+6 \lambda _3 \lambda _4 \lambda _2\nonumber\\&+&4 \lambda _3^3+4 \lambda _4^3+6 \lambda _3 \lambda _4^2+6 \lambda _3^2 \lambda _4-\lambda  \left(9 \lambda _2+9 \lambda _3+9 \lambda _4\right)-16 \lambda _2^3 \epsilon _1+6 \lambda _3 \lambda _2^2 \epsilon _1+6 \lambda _4 \lambda _2^2 \epsilon _1\nonumber\\&+&6 \lambda _3^2 \lambda _2 \epsilon _1+6 \lambda _4^2 \lambda _2 \epsilon _1+12 \lambda _3 \lambda _4 \lambda _2 \epsilon _1-16 \lambda _3^3 \epsilon _1-16 \lambda _4^3 \epsilon _1+6 \lambda _3 \lambda _4^2 \epsilon _1+6 \lambda _3^2 \lambda _4 \epsilon _1.\nonumber
\end{eqnarray}
Assume for a moment that $\lambda _2 \lambda _3 \lambda _4\neq 0$. Then from (\ref{a7}), we find that  
\begin{equation}\label{a8}
 F_4(\lambda_2, \lambda_3, \lambda_4)=0.
\end{equation}

Differentiating (\ref{a8}) with respect to $e_1$ and eliminating $b_2, b_3,b_3$ and $\lambda_1$ using (\ref{e:b9}) and (\ref{e:ba17}) from it, we get
\begin{equation}\label{a9}
 F_5(\lambda_2, \lambda_3, \lambda_4)=0,
 \end{equation}
where
\begin{eqnarray}
F_5(\lambda_2, \lambda_3, \lambda_4)&=&2 \varepsilon  \lambda _2^4+5 \varepsilon  \lambda _3 \lambda _2^3+5 \varepsilon  \lambda _4 \lambda _2^3+15 \varepsilon  \lambda _3^2 \lambda _2^2+15 \varepsilon  \lambda _4^2 \lambda _2^2+9 \varepsilon  \lambda _3 \lambda _4 \lambda _2^2+23 \varepsilon  \lambda _3^3 \lambda _2+23 \varepsilon  \lambda _4^3 \lambda _2\nonumber\\&+&18 \varepsilon  \lambda _3 \lambda _4^2 \lambda _2+18 \varepsilon  \lambda _3^2 \lambda _4 \lambda _2+20 \varepsilon  \lambda _3^4+20 \varepsilon  \lambda _4^4+23 \varepsilon  \lambda _3 \lambda _4^3+24 \varepsilon  \lambda _3^2 \lambda _4^2+23 \varepsilon  \lambda _3^3 \lambda _4+8 \lambda _2^4\nonumber\\&+&11 \lambda _3 \lambda _2^3+11 \lambda _4 \lambda _2^3+12 \lambda _3^2 \lambda _2^2+12 \lambda _4^2 \lambda _2^2+12 \lambda _3 \lambda _4 \lambda _2^2+11 \lambda _3^3 \lambda _2+11 \lambda _4^3 \lambda _2+12 \lambda _3 \lambda _4^2 \lambda _2\nonumber\\&+&12 \lambda _3^2 \lambda _4 \lambda _2+8 \lambda _3^4+8 \lambda _4^4+11 \lambda _3 \lambda _4^3+12 \lambda _3^2 \lambda _4^2+11 \lambda _3^3 \lambda _4-\lambda  (6 \lambda _2^2+3 \lambda _3 \lambda _2+3 \lambda _4 \lambda _2\nonumber\\&+&6 \lambda _3^2+6 \lambda _4^2+3 \lambda _3 \lambda _4)-32 \lambda _2^4 \epsilon _1+\lambda _3 \lambda _2^3 \epsilon _1+\lambda _4 \lambda _2^3 \epsilon _1+12 \lambda _3^2 \lambda _2^2 \epsilon _1+12 \lambda _4^2 \lambda _2^2 \epsilon _1\nonumber\\&+&18 \lambda _3 \lambda _4 \lambda _2^2 \epsilon _1+\lambda _3^3 \lambda _2 \epsilon _1+\lambda _4^3 \lambda _2 \epsilon _1+18 \lambda _3 \lambda _4^2 \lambda _2 \epsilon _1+18 \lambda _3^2 \lambda _4 \lambda _2 \epsilon _1-32 \lambda _3^4 \epsilon _1\nonumber\\&-&32 \lambda _4^4 \epsilon _1+\lambda _3 \lambda _4^3 \epsilon _1+12 \lambda _3^2 \lambda _4^2 \epsilon _1+\lambda _3^3 \lambda _4 \epsilon _1.\nonumber
\end{eqnarray}

Eliminating $\lambda_2$  using (\ref{a8})  from  (\ref{a9}), we obtain
\begin{eqnarray}\label{a10}
F_6(\lambda_3,\lambda_4)=0,
\end{eqnarray}
where
\begin{eqnarray}
F_6(\lambda_3,\lambda_4)&=&\lambda _3^{12} \left(518807386 \varepsilon -4 (120770281 \varepsilon +131102956) \epsilon _1+486317196\right)\nonumber\\&-&6 \lambda _3^{11} \lambda _4 \left(-98611981 \varepsilon +2 (55935542 \varepsilon +47015777) \epsilon _1-104941596\right)+\dots .\nonumber
\end{eqnarray}

Differentiating (\ref{a10}) with respect to $e_1$ and eliminating $b_3, b_4$ and $\lambda_1$ using (\ref{la2}) and  (\ref{e:ba17}) and  (\ref{a7})  from it, we get
\begin{eqnarray}\label{a11}
F_7(\lambda_3,\lambda_4)=0.
\end{eqnarray}

Eliminating $\lambda_3$  using (\ref{a10})  from  (\ref{a11}), we obtain a polynomial equation in terms of $\lambda_4$ with real coefficients, therefore $\lambda_4$ must be real constant. Using this fact in 
(\ref{a10}), we get that $\lambda_3$ is a real constant. Then, using $\lambda_3, \lambda_4$ as constants in (\ref{a9}) we obtain $\lambda_2$ a real constant. Therefore, from (\ref{e:b9}) we find that $\lambda_1$ must be constant, a contradiction to (\ref{e:b3}). Hence
$\lambda_2\lambda_3\lambda_4=0$, and
this means that at least one  $\lambda_2, \lambda_3, \lambda_4$ must be zero. If we choose 
$\lambda_2=0$ arbitrary. Then from (\ref{e:b6}), (\ref{e:b9}), and (\ref{e:ba17}), we obtain
\begin{equation}\label{a12}
b_2=0,\ 3 \lambda _1+\lambda _3+\lambda _4=0,   b_3 b_4=-\epsilon_1 \lambda_3 \lambda_4.
\end{equation}

Using (\ref{a12}) in (\ref{a6}) and eliminating $\lambda_1$ and $b_3$, we find 
\begin{eqnarray}\label{a13}
4 \left(b_4^4 \left(\lambda _3+4 \lambda _4\right)+\lambda _3^2 \lambda _4^2 \left(4 \lambda _3+\lambda _4\right)\right)=b_4^2 \left(\lambda _3+\lambda _4\right) \epsilon _1 (2 (5 \varepsilon +2) \lambda _3^2\\+2 (\varepsilon -4) \lambda _4 \lambda _3+2 (5 \varepsilon +2) \lambda _4^2-9 \lambda ).\nonumber
\end{eqnarray}

Differentiating (\ref{a13}) with respect to $e_1$, and eliminating $\lambda_1$ and $b_3$ using (\ref{a12}), we find
\begin{eqnarray}\label{a14}
b_4^2 \lambda _4 (4 (35 \varepsilon +22) \lambda _3^4+10 (17 \varepsilon +12) \lambda _4 \lambda _3^3+\lambda _3^2 \left(8 (15 \varepsilon +1) \lambda _4^2-54 \lambda \right)+\lambda _3 ((56 \varepsilon \\-4) \lambda _4^3-45 \lambda  \lambda _4)+4 (5 \varepsilon +2) \lambda _4^4-18 \lambda  \lambda _4^2)+64 b_4^6 \left(\lambda _3+4 \lambda _4\right)+b_4^4 \epsilon _1 (-4 (18 \varepsilon +5) \lambda _3^3\nonumber\\-16 (9 \varepsilon -1) \lambda _4 \lambda _3^2+\lambda _3 \left(63 \lambda -2 (99 \varepsilon +20) \lambda _4^2\right)-4 (45 \varepsilon +34) \lambda _4^3+90 \lambda  \lambda _4)\nonumber\\=8 \lambda _3^2 \lambda _4^3 \left(24 \lambda _3^2+10 \lambda _4 \lambda _3+\lambda _4^2\right) \epsilon _1.\nonumber
\end{eqnarray}

Eliminating $b_4$ using (\ref{a13}) from  (\ref{a14}), we get
\begin{equation}\label{a15}
F_8(\lambda_3,\lambda_4)=0,
\end{equation}
where
\begin{eqnarray}\label{la5}
F_8(\lambda_3,\lambda_4)&=&48 \lambda _3^3 \lambda _4^3 (4 \lambda _3^3+21 \lambda _4 \lambda _3^2+21 \lambda _4^2 \lambda _3+4 \lambda _4^3) (-4 \lambda _3^4 (162 (37 \varepsilon +5) \lambda ^2+2 (10679 \varepsilon \nonumber\\&+&634) \lambda _4^4-27 (218 \varepsilon +405) \lambda  \lambda _4^2)-4 \lambda _4 \lambda _3^3 (162 (91 \varepsilon -16) \lambda ^2+2 (2209 \varepsilon +7118) \lambda _4^4\nonumber\\&+&3 (1697-7196 \varepsilon ) \lambda  \lambda _4^2)+2 \lambda _3^2 (-2187 (13 \varepsilon -6) \lambda ^2 \lambda _4^2+8 (1157 \varepsilon +613) \lambda _4^6+54 (218 \varepsilon \nonumber\\&+&405) \lambda  \lambda _4^4+6561 \lambda ^3)+\lambda _4 \lambda _3 (-648 (91 \varepsilon -16) \lambda ^2 \lambda _4^2+16 (1036 \varepsilon +1457) \lambda _4^6-108 (202 \varepsilon \nonumber\\&-&183) \lambda  \lambda _4^4+28431 \lambda ^3)+2 (-324 (37 \varepsilon +5) \lambda ^2 \lambda _4^4+880 (10 \varepsilon +11) \lambda _4^8-12 (496 \varepsilon \nonumber\\&+&53) \lambda  \lambda _4^6+6561 \lambda ^3 \lambda _4^2)+1760 (10 \varepsilon +11) \lambda _3^8+16 (1036 \varepsilon +1457) \lambda _4 \lambda _3^7-8 \lambda _3^6 (3 (496 \varepsilon \nonumber\\ &+&53) \lambda-2 (1157 \varepsilon +613) \lambda _4^2)-4 \lambda _4 \lambda _3^5 (2 (2209 \varepsilon +7118) \lambda _4^2+27 (202 \varepsilon -183) \lambda)).\nonumber
\end{eqnarray}

Differentiating (\ref{a15}) with respect to $e_1$ and eliminating $\lambda_1, b_3, b_4$ using (\ref{a12}) and (\ref{a13})  from  it, we obtain
\begin{equation}\label{a16}
F_9(\lambda_3,\lambda_4)=0.
\end{equation}

Eliminating $\lambda_3$  using (\ref{a15})  from  (\ref{a16}), we obtain a polynomial equation in terms of $\lambda_4$ with real coefficients, therefore $\lambda_4$ must be real constant. Using this fact in 
(\ref{a15}), we get $\lambda_3$ a real constant. Then, from (\ref{a12}), we find $\lambda_1$ must be a real constant, a contradiction to (\ref{e:b3}).

\smallskip
\noindent
\textbf{Case II:} $b_1=0$, so (\ref{e:d44}) holds. Then, from (\ref{e:b7}) and (\ref{e:b5}) we obtain
\begin{equation}\label{a17}
(\lambda_{2}-\lambda_{3})\omega_{42}^{3}=(\lambda_{4}-\lambda_{3})\omega_{24}^{3}=(\lambda_{2}-\lambda_{4})\omega_{32}^{4}.
\end{equation}

From (\ref{a17}) and (\ref{e:b5}), we find
\begin{equation}\label{a18}
\omega_{34}^{2}\omega_{43}^{2}+\omega_{42}^{3}\omega_{24}^{3}+\omega_{32}^{4}\omega_{23}^{4}=0,
\end{equation}

Adding (\ref{la1}), (\ref{la2}),(\ref{la3}) and using  (\ref{a18}), we obtain
\begin{equation}\label{a19}
 b_2 b_3+ b_2 b_4+b_3 b_4=-\epsilon_1
(\lambda_{2} \lambda_{3}+ \lambda_{2} \lambda_{4}+\lambda_{3} \lambda_{4})
\end{equation}
respectively.

Using (\ref{e:d44}) and (\ref{e:b9}) in (\ref{a19}), we get
\begin{equation}\label{a20}
 -6 \alpha  \lambda _1 \phi +k \left(\alpha ^2+\epsilon _1\right)+3 \phi ^2=0,
\end{equation}
where $k=\lambda_{2} \lambda_{3}+ \lambda_{2} \lambda_{4}+\lambda_{3} \lambda_{4}$.

Now, differentiating (\ref{e:b9}) with respect to $e_1$ and using
(\ref{e:d44}), (\ref{e:b9}), we obtain
\begin{equation}\label{a21}
-3 e_1(\lambda _1)=12 \alpha  \lambda _1^2-2 \alpha  k-6 \lambda _1 \phi.
\end{equation}

Using (\ref{e:d44}), (\ref{e:b9}) in (\ref{e:b17}) , we find
\begin{equation}\label{a22}
3 \epsilon _1 e_1(\lambda _1) \left(\phi -\alpha  \lambda _1\right)+\varepsilon  \lambda _1 \left(10 \lambda _1^2-2 k\right)-\lambda  \lambda _1-\epsilon _1 e_1e_1(\lambda _1)=0.
\end{equation}

Now, differentiating the function $k$  with respect to $e_1$ and using
(\ref{e:d44}), (\ref{e:b6}), (\ref{e:b9}), we obtain
\begin{equation}\label{a23} 
e_1(k)=k \left(2 \phi -5 \alpha  \lambda _1\right)+6 \lambda _1^2 \phi -3 \alpha  P,
\end{equation}
where $P=\lambda_2\lambda_3\lambda_4$.

Now, differentiating (\ref{a21}) with respect to $e_1$ and using
(\ref{e:d44}), (\ref{a21}) and (\ref{a23}), we obtain
\begin{eqnarray}\label{a25}
-3 e_1e_1(\lambda _1)&=&2 (6 \lambda _1^3 (\epsilon _1-7 \alpha ^2)+33 \alpha  \lambda _1^2 \phi +\lambda _1 (12 \alpha ^2 k-k \epsilon _1-9 \phi ^2)\\&+&\alpha  (3 \alpha  P-5 k \phi )).\nonumber
\end{eqnarray}

Eliminating $e_1e_1(\lambda _1)$ from (\ref{a25}) using (\ref{a22}), we
get
\begin{eqnarray}\label{a26}
\lambda _1 (6 (5 \varepsilon +2) \lambda _1^2-6 \varepsilon  k-2 k-3 \lambda)\\+2 \alpha  \epsilon _1 \left(-24 \alpha  \lambda _1^3+9 \alpha  k \lambda _1+6 \lambda _1^2 \phi -2 k \phi +3 \alpha  P\right)=0.\nonumber
\end{eqnarray}

Now, differentiating (\ref{a20}) with respect to $e_1$ and using
(\ref{e:d44}), (\ref{a21}) and (\ref{a23}), we obtain
\begin{equation}\label{a27}
6 \left(3 \alpha ^2 \lambda _1^2 \phi +\phi ^3\right)+\epsilon _1 \left(-3 \alpha  k \lambda _1+2 k \phi -3 \alpha  P\right)=3 \lambda _1 \left(6 \alpha  \phi ^2+\alpha ^3 k\right)+3 \alpha ^3 P.
\end{equation}

Eliminating $P$, from (\ref{a26}) using (\ref{a27}), we get
\begin{multline}\label{a28}
\epsilon _1 (6 \lambda _1^3 \left(8 \alpha ^4-5 \varepsilon -2\right)-48 \alpha ^3 \lambda _1^2 \phi +4 \alpha  \phi  \left(\alpha ^2 k-3 \phi ^2\right)+\lambda _1 (3 \left(12 \alpha ^2 \phi ^2+\lambda \right)\\+k \left(-12 \alpha ^4+6 \varepsilon +2\right)))+\alpha  \lambda _1 \left(-6 \alpha  (5 \varepsilon -6) \lambda _1^2+3 \alpha  \lambda +2 \alpha  (3 \varepsilon -5) k-12 \lambda _1 \phi \right)=0.
\end{multline}

Eliminating $k$, from (\ref{a28}) using (\ref{a20}), we get
\begin{eqnarray}\label{a29}
2 \lambda _1^3 \left(\alpha ^4 (5 \varepsilon -14)+5 \varepsilon +2\right)+\phi ^3 \left(8 \alpha ^3 \epsilon _1+4 \alpha \right)+\phi (40 \alpha ^5 \lambda _1^2 \epsilon _1+4 \alpha ^3 (10\\-3 \varepsilon ) \lambda _1^2-12 \alpha  \varepsilon  \lambda _1^2 \epsilon _1)+\phi ^2 \left(2 \lambda _1 \epsilon _1 \left(-16 \alpha ^4+3 \varepsilon +1\right)+2 \alpha ^2 (3 \varepsilon -11) \lambda _1\right)\nonumber\\-4 \alpha ^2 \lambda _1^3 \epsilon _1 \left(4 \alpha ^4-5 \varepsilon +2\right)-\lambda  \lambda _1 \left(\alpha ^4+2 \alpha ^2 \epsilon _1+1\right)=0.\nonumber
\end{eqnarray}

Also, from (\ref{a20}) and (\ref{a21}), we have
\begin{equation}\label{a30} e_1(\lambda_1)=-\frac{2 \left(\phi -2 \alpha  \lambda _1\right) \left(\alpha ^2 \left(-\lambda _1\right)+\alpha  \phi -\lambda _1 \epsilon _1\right)}{\alpha ^2+\epsilon _1}.
\end{equation}

Now, differentiating (\ref{a29}) with respect to $e_1$ and using
(\ref{e:d44}) and (\ref{a30}), we
get 
\begin{eqnarray}\label{a31}
8 \alpha  \lambda _1^4 \epsilon _1 \left(6 \alpha ^8+\alpha ^4 (6-15 \varepsilon )-5 \varepsilon -4\right)+\phi ^4 \left(56 \alpha ^5 \epsilon _1-6 \alpha ^3 (\varepsilon -9)-6 \alpha  (\varepsilon -1) \epsilon _1\right)\\-8 \alpha ^3 \lambda _1^4 \left(\alpha ^4 (5 \varepsilon -14)+15 \varepsilon +6\right)+\phi ^2 (-2 \alpha  \lambda _1^2 \left(\alpha ^4 (63 \varepsilon -254)+33 \varepsilon +18\right)+\left(\alpha ^4+1\right) \alpha  \lambda\nonumber \\+2 \alpha ^3 \lambda  \epsilon _1+8 \alpha ^3 \lambda _1^2 \epsilon _1 \left(47 \alpha ^4-24 \varepsilon +12\right))+\phi ^3 (2 \alpha ^4 (27 \varepsilon -137) \lambda _1-2 \alpha ^2 \lambda _1 \epsilon _1 \left(124 \alpha ^4-33 \varepsilon +18\right)\nonumber\\+6 (2 \varepsilon +1) \lambda _1)+\phi  (-5 \alpha ^6 \lambda  \lambda _1-11 \alpha ^4 \lambda  \lambda _1 \epsilon _1-7 \alpha ^2 \lambda  \lambda _1-4 \lambda _1^3 \epsilon _1 \left(58 \alpha ^8+\alpha ^4 (35-73 \varepsilon )-6 \varepsilon -3\right)\nonumber\\+2 \alpha ^2 \lambda _1^3 \left(\alpha ^4 (67 \varepsilon -212)+91 \varepsilon +32\right)-\lambda  \lambda _1 \epsilon _1)=0.\nonumber
\end{eqnarray}

Now, differentiating (\ref{a31}) with respect to $e_1$ and using
(\ref{e:d44}) and (\ref{a30}), we
get 
\begin{multline}\label{a32}
-8 \lambda _1^5 \epsilon _1 \left(42 \alpha ^{12}-5 \alpha ^8 (35 \varepsilon -8)-150 \alpha ^4 (\varepsilon +1)+5 \varepsilon +4\right)+8 \alpha ^2 \lambda _1^5 (3 \alpha ^8 (15 \varepsilon -38)\\+10 \alpha ^4 (25 \varepsilon +14)+25 \varepsilon +38)+\phi ^4 (-4720 \alpha ^8 \lambda _1 \epsilon _1-2 \alpha ^4 (1198-945 \varepsilon ) \lambda _1 \epsilon _1+4 \alpha ^2 \lambda _1 (\alpha ^4 (291 \varepsilon\\ -1790)+195 \varepsilon +36)+18 (3 \varepsilon +2) \lambda _1 \epsilon _1)+\phi ^5 (10 \alpha ^5 (143-15 \varepsilon )+4 \alpha ^3 \epsilon _1 \left(250 \alpha ^4-51 \varepsilon +120\right)\\+18 \alpha  (1-3 \varepsilon ))+\phi  (-56 \alpha ^7 \lambda  \lambda _1^2 \epsilon _1-48 \alpha ^3 \lambda  \lambda _1^2 \epsilon _1-2 \alpha  \lambda _1^4 (\alpha ^8 (955 \varepsilon -2624)\\+3 \alpha ^4 (1085 \varepsilon +562)+2 (65 \varepsilon +73))-\alpha  (15 \alpha ^8+78 \alpha ^4+11) \lambda  \lambda _1^2+2 \alpha ^3 \lambda _1^4 \epsilon _1(1140 \alpha ^8+\alpha ^4 (886-2995 \varepsilon )\\-1345 \varepsilon -1234)+\phi ^2 (-144 \alpha ^6 \lambda  \lambda _1 \epsilon _1-32 \alpha ^2 \lambda  \lambda _1 \epsilon _1+2 \lambda _1^3 (2 \alpha ^8 (949 \varepsilon -3119)+\alpha ^4 (3847 \varepsilon +1454)\\+51 \varepsilon +24)-2 (29 \alpha ^8+58 \alpha ^4+1) \lambda  \lambda _1+2 \alpha ^2 \lambda _1^3 \epsilon _1 (-3136 \alpha ^8+5 \alpha ^4 (949 \varepsilon -474)+1051 \varepsilon +746))\\+\phi ^3 (8048 \alpha ^9 \lambda _1^2 \epsilon _1+17 \alpha ^7 \lambda +2 \alpha ^5 (2309-2979 \varepsilon ) \lambda _1^2 \epsilon _1+\left(39 \alpha ^4+5\right) \alpha  \lambda  \epsilon _1+27 \alpha ^3 \lambda -2 \alpha ^3 \lambda _1^2 (\alpha ^4 (1518 \varepsilon\\ -6767)+1704 \varepsilon +635)-6 \alpha  (81 \varepsilon +67) \lambda _1^2 \epsilon _1).
\end{multline}

Eliminating $\phi$ from (\ref{a31}) and  (\ref{a32})  using (\ref{a29}), we find
\begin{equation}\label{a33}
F_{10}(\alpha,\lambda_1)=0,
\end{equation}
 and 
\begin{equation}\label{a34}
F_{11}(\alpha,\lambda_1)=0,
\end{equation}
respectively, where
\begin{eqnarray}\label{e:ba42}
F_{10}(\alpha,\lambda_1)&=&\alpha ^{30} (-1024 (45 \varepsilon -16) \lambda ^3 \lambda _1^4 \epsilon _1-512 (356 \varepsilon -493) \lambda ^2 \lambda _1^6 \epsilon _1\nonumber\\&-&2048 (163 \varepsilon +8) \lambda _1^{10} \epsilon _1-1024 (287 \varepsilon -389) \lambda  \lambda _1^8 \epsilon _1+2560 \lambda ^4 \lambda _1^2 \epsilon _1)+\dots,\nonumber
\end{eqnarray}
\begin{eqnarray}\label{e:ba42}
F_{11}(\alpha,\lambda_1)&=&\alpha ^{40} (-1792 (674565 \varepsilon -3009044) \lambda ^4 \lambda _1^4 \epsilon _1-512 (194248606 \varepsilon -78896071) \lambda ^3 \lambda _1^6 \epsilon _1\nonumber\\&-&1024 (365259971 \varepsilon -525070142) \lambda ^2 \lambda _1^8 \epsilon _1-32768 (21285955 \varepsilon +1679714) \lambda _1^{12} \epsilon _1\nonumber\\&-&8192 (73514984 \varepsilon -94715669) \lambda  \lambda _1^{10} \epsilon _1+43265920 \lambda ^5 \lambda _1^2 \epsilon _1)+\dots .\nonumber
\end{eqnarray}

Eliminating $\lambda_1$ from (\ref{a33}) and (\ref{a34}), we find
\begin{eqnarray}\label{a35}
97922991388784963151200256 \alpha ^{16}\lambda ^{60}\\(2\alpha ^2(11988081807762758701231508180334280704000 \alpha ^{368}\nonumber \\(19740310929523 \varepsilon +22414659708205)+\dots))^2=0,\nonumber
\end{eqnarray}
 which is polynomial equation in terms of $\alpha$ with real coefficients. Hence $\alpha$ must be constant. Using this fact in (\ref{a33}), gives  $\lambda_1$ a constant, a contradiction to (\ref{e:b3}).
 Whereby the proof of the Theorem 1.1 is complete.

\section{\textbf{Biharmonic hypersurfaces in $E^5_s$ with diagonal shape
operator}}

In this section, we study biharmonic non-degenerate hypersurfaces
Riemannian or semi-Riemannian $M_{r}^{4}$  of diagonal shape
operator. A biharmonic submanifold in a semi-Euclidean space is
called proper
 biharmonic if it is not minimal. From biharmonic equation $\triangle \vec{H}=0,$ the necessary and sufficient
 conditions for $M_{r}^{4}$ to be biharmonic in $E_{s}^{5}$ are
\begin{equation}\label{e:d1}
\triangle H + \varepsilon H \trace \mathcal{A}^{2} = 0,
\end{equation}
 and (\ref{e:a11}).\\

\noindent
{\it Proof of Theorem 1.3}.
We consider the following cases:

\medskip
\noindent
(a) \emph{All  principal curvatures are distinct.}\\
 In this case proceeding in a similar way as
of Section 3 and Section 4 by taking $\lambda=0$, we get that $H$ is
constant. Then, from (\ref{e:d1}), we get that $H=0$.

\medskip
\noindent
(b) \emph{Three distinct principal curvatures.} \\
The case of three distinct principal curvatures for
biharmonic hypersurfaces in semi-Euclidean space $E_{s}^{5}$ has
already been treated in \cite{r19} and have also
concluded that $H$ must be zero.

\medskip
\noindent
(c) \emph{Two distinct principal curvatures.}\\
Let  $M_{r}^{4}$ be a biharmonic hypersurface of diagonal shape
operator with two distinct principal curvatures $\lambda_1$ and
$\lambda_2$ of multiplicities $n_1$ and $n_2=4-n_1$, respectively.
We assume that the mean curvature is not constant and $\grad
H\neq0$. Assuming non constant mean curvature implies the existence
of an open connected subset $U$ of $M^{4}_{r}$ with $\grad_{p}H \neq
0$ for all $p\in U$. From (\ref{e:a11}), it is easy to see that
$\grad H$ is an eigenvector of the shape operator $\mathcal{A}$ with
the corresponding principal curvature $-2\varepsilon H$. Without
lose of generality, we choose $e_{1}$ in the direction of grad$H$
and, therefore, equation (\ref{e:b3}) also holds in this case.

From (2.4), we find
\begin{equation}\label{e:d2}
n_1\lambda_1+(4-n_1)\lambda_2=4\varepsilon H.
\end{equation}

For $n_1>1$, from (\ref{e:b6}), we get that $e_1(H)=0$, which is a
contradiction of (\ref{e:b3}). Therefore, the shape operator will
take the following form with respect to a suitable frame  $\{e_{1},
e_{2}, e_{3}, e_{4}\}$
\begin{equation}\label{e:d3} \mathcal{A}e_1=-2\varepsilon He_1, \quad\mbox{and}\quad \mathcal{A}e_i=\lambda_2 e_i,  \quad \mbox{for}\quad i=2, 3, 4.
\end{equation}

From (\ref{e:d2}) and (\ref{e:d3}), we get
\begin{equation}\label{e:d4}
\lambda_2=2\varepsilon H.
\end{equation}

Also, from (\ref{e:b6}) and (\ref{e:d3}), we obtain
\begin{equation}\label{e:d5}
\epsilon_{2}\omega_{22}^{1}=\epsilon_{3}\omega_{33}^{1}=\epsilon_{4}\omega_{44}^{1}=\frac{e_{1}(H)}{2H}.
\end{equation}

Also, using (\ref{e:a7}), evaluating $R(e_{1}, e_{2}, e_{1},
e_{2})$, gives
\begin{equation}\label{e:d6}
 e_{1}(\omega_{22}^{1})\epsilon_{2}=(\omega_{22}^{1})^{2}-\epsilon_{1}4H^{2}.
\end{equation}

Using (\ref{e:d5}) and (\ref{e:d6}), we find
\begin{equation}\label{e:d7}
\epsilon_{1}e_{1}e_{1}(H)=\frac{3e_{1}^{2}(H)}{2H}-8H^{3}.
\end{equation}

On the other hand, from (\ref{e:d1}), (\ref{e:a12}), (\ref{e:d3}),
and (\ref{e:d5}), we have
\begin{equation}\label{e:d8}
\epsilon_{1}e_{1}e_{1}(H)=\frac{3e_{1}^{2}(H)}{2H}+16H^{3}.
\end{equation}

From (\ref{e:d7}) and (\ref{e:d8}), we get that $H$ must be zero.\\

Combining (a), (b) and (c), the proof of the Theorem 1.3 is
complete.


\bibliography{xbib}


\medskip
Authors' addresses:\\
\textbf{Ram Shankar Gupta }\\
 University School of Basic and Applied
Sciences,
 Guru Gobind Singh Indraprastha University,
  Sector--16C,
Dwarka, New Delhi--110078,
India.\\
\smallskip
\noindent
\textbf{Email:} ramshankar.gupta@gmail.com\\
\textbf{Andreas Arvanitoyeorgos }\\
 University of Patras, Department of Mathematics, GR-26500 Patras, Greece and\\
 Hellenic Open University, Aristotelous 18, GR-26335 Patras, Greece\\
\textbf{Email:} arvanito@math.upatras.gr\\
\end{document}